\theoremstyle{plain} 
\newtheorem{lemma}[equation]{Lemma} 
\newtheorem{proposition}[equation]{Proposition} 
\newtheorem{theorem}[equation]{Theorem} 
\newtheorem{corollary}[equation]{Corollary} 
\newtheorem{priorResults}{Theorem}
\theoremstyle{definition}
\theoremstyle{remark}
\numberwithin{equation}{section}
\begin{document}
 
 \title{Sparse Bounds for the Discrete Cubic Hilbert Transform}
 \date{}
 \author[Culiuc]{Amalia Culiuc}   

\address{School of Mathematics, Georgia Institute of Technology, Atlanta GA 30332, USA}
\email {amalia@math.gatech.edu}
\thanks{Research supported in part by grant  NSF-DMS-1600693}
 
 \author[Kesler]{Robert Kesler}   

\address{School of Mathematics, Georgia Institute of Technology, Atlanta GA 30332, USA}
\email {rkesler6@math.gatech.edu}

 \author[Lacey]{Michael T. Lacey}   

\address{ School of Mathematics, Georgia Institute of Technology, Atlanta GA 30332, USA}
\email {lacey@math.gatech.edu}
\thanks{Research supported in part by grant  NSF-DMS-1600693}
 
 \maketitle 
\begin{abstract}
 Consider the discrete cubic Hilbert transform defined on finitely supported functions $f$  on $\mathbb{Z}$ by
 \begin{eqnarray*}
 H_3f(n) = \sum_{m \not = 0} \frac{f(n- m^3)}{m}.
 \end{eqnarray*}
We prove that there exists $r <2$ and universal constant $C$ such that for all finitely supported $f,g$ on $\mathbb{Z}$  there exists an $(r,r)$-sparse form ${\Lambda}_{r,r}$ for which 
 \begin{eqnarray*}
\left| \langle H_3f, g \rangle \right| \leq C {\Lambda}_{r,r} (f,g). 
 \end{eqnarray*}
This is the first result of this type concerning discrete harmonic analytic operators. It immediately implies some weighted inequalities, which are also new in this setting.

 \end{abstract}

\section{Introduction} 

The purpose of this paper is to initiate a theory of sparse domination for discrete operators in harmonic analysis.  
We do so in the simplest non-trivial case; it will be clear that there is a much richer theory to be uncovered.  

Our main result concerns the discrete cubic Hilbert transform, defined for finitely supported functions $f$ on $\mathbb{Z}$ by 
\begin{equation*}
H_3 f (x) = \sum_{n\neq 0}\frac { f (x- n ^{3})}n .  
\end{equation*}
It is known  \cites{MR1056560,IW} that this operator extends to a bounded linear operator on $ \ell ^{p} (\mathbb Z )$ to $ \ell ^{p} (\mathbb Z )$, for all $ 1< p < \infty $.  We prove a sparse bound, which in turn proves certain weighted inequalities. Both results are entirely new.  

By an \emph{interval} we mean a set $ I = \mathbb Z \cap [a,b]$, for $ a< b \in \mathbb R $.  For $ 1\leq r < \infty $, set 
\begin{equation*}
\langle f \rangle _{I,r} := \Bigl[\frac 1 {\lvert  I\rvert } \sum_{x\in I} \lvert  f (x)\rvert ^{r}  \Bigr] ^{1/r}. 
\end{equation*}   
We say a collection of intervals $ \mathcal S $ is \emph{sparse} if there are subsets $ E_S \subset S \subset \mathbb Z $ with (a) $ \lvert  E_S\rvert > \tfrac 14 \lvert  S\rvert  $, uniformly in $ S\in \mathcal S$, and  (b) the sets $ \{E_S  :  S\in \mathcal S\}$ are pairwise disjoint.  For  sparse collections $ \mathcal S$, consider sparse bi-sublinear forms 
\begin{equation*}
\Lambda _{\mathcal S, r, s} (f,g) := \sum_{S\in \mathcal S} \lvert  S\rvert \langle f \rangle _{S,r} \langle g \rangle _{S,s}. 
\end{equation*}
 Frequently we will suppress the  collection $ \mathcal S$, and if $ r=s=1$, we will suppress this dependence as well.  

The main result of this paper is the following theorem.
\begin{theorem}\label{t:main}  There is a choice of $ 1< r <2 $, and constant $ C>0$ so that for all $ f, g $ that are finitely supported on $\mathbb{Z}$, there is a sparse collection of intervals $ \mathcal S$, so that 
\begin{equation}\label{e:main}
\lvert  \langle H_3 f, g \rangle\rvert \leq C \Lambda _{\mathcal S, r,r} (f,g).  
\end{equation}
\end{theorem}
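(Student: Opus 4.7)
The plan is to combine the Hardy--Littlewood circle method, which underlies the $\ell^p$ boundedness of $H_3$, with a Calder\'on--Zygmund stopping-time argument of the kind now standard in the sparse-bounds literature. I would begin with the dyadic decomposition $H_3 f = \sum_{j \ge 0} H_{3,j} f$, where $H_{3,j}f(x) = \sum_{2^j \le |n| < 2^{j+1}} f(x-n^3)/n$, so that the convolution kernel of $H_{3,j}$ is supported on integers of order $2^{3j}$. The circle method then splits $H_{3,j} = M_j + E_j$, where the minor-arc piece satisfies $\|E_j\|_{\ell^p \to \ell^p} \lesssim 2^{-\delta_p j}$ for $p$ in a neighborhood of $2$ (via Weyl's inequality and interpolation), while the major-arc piece $M_j$ is (up to acceptable errors) a sum over rationals $a/q$ with $q \le 2^{\epsilon j}$ of arithmetically modulated continuous Hilbert-transform-like convolutions at spatial scale $2^{3j}$, weighted by Gauss/Ramanujan sums.

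To produce a sparse form, it suffices, by a well-known iteration, to prove the following recursive inequality: for every interval $I_0$ and every pair $f, g$ supported on a fixed enlargement of $I_0$, there is a pairwise-disjoint collection of strict subintervals $\{Q\}$ with $\sum_Q |Q| \le |I_0|/4$ such that
\[ |\langle H_3 f, g \rangle| \le C |I_0| \langle f \rangle_{I_0, r} \langle g \rangle_{I_0, r} + \sum_Q |\langle H_3(f\mathbf{1}_Q), g \mathbf{1}_Q\rangle|. \]
The intervals $Q$ are constructed as the maximal subintervals of $I_0$ on which either $\langle f\rangle_{\cdot, r}$ or $\langle g\rangle_{\cdot, r}$ is appreciably larger than on $I_0$, or on which a suitable maximal truncation of $H_3$ (or of its single-scale pieces) is large. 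Iterating across scales and summing over an initial covering of $\operatorname{supp}(f) \cup \operatorname{supp}(g)$ by maximal intervals produces the sparse collection $\mathcal S$ and the bound \eqref{e:main}.

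The core work is controlling the difference $\sum_j \langle (M_j + E_j)(f - \sum_Q f\mathbf{1}_Q),\, g - \sum_Q g\mathbf{1}_Q \rangle$ by $C|I_0|\langle f\rangle_{I_0,r}\langle g\rangle_{I_0,r}$. The minor-arc contribution $\sum_j E_j$ is summed directly using its geometric $\ell^p$-decay combined with standard Littlewood--Paley estimates, yielding a clean $\langle f\rangle_{I_0,r}\langle g\rangle_{I_0,r}$ bound for any $r$ slightly below $2$. For the major-arc contribution, I would transfer the problem to the corresponding continuous Hilbert-transform-like operators via a smooth approximation (controlling the discretization error using the lacunarity of the stopping scales), apply a known continuous sparse bound, and then sum over $a/q$ by exploiting cancellation in the Ramanujan-sum weights.

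The principal obstacle is precisely the major arcs: the arithmetic structure (rationals $a/q$) has no a priori interaction with the purely geometric stopping-time structure, so one must either extract sufficient cancellation in $q$ to sum over the arithmetic spectrum or invoke a uniform sparse $T(1)$-type argument for each Gauss-weighted piece, losing only a power of $\log q$ that is absorbed by Weyl-type decay. A secondary complication is the cubic scaling mismatch: the stopping-interval scale $|I_0|$ and the operator scale $2^{3j}$ must be carefully tracked, particularly in the transitional regime $2^{3j} \approx |I_0|$ where neither the ``local'' nor the ``tail'' treatment applies directly, and one must use the maximal truncation of $H_3$ entering the stopping-time construction to close the argument.
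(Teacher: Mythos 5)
Your high-level architecture (circle-method split of the multiplier into major and minor arcs, followed by a Calder\'on--Zygmund style reduction to a sparse form) matches the paper's in outline, but both of the two core estimates are missing the ideas that actually make the argument close, so as written there is a genuine gap in each piece.

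\textbf{Minor arcs.} You propose to handle $\sum_j E_j$ ``using its geometric $\ell^p$-decay combined with standard Littlewood--Paley estimates, yielding a clean $\langle f\rangle_{I_0,r}\langle g\rangle_{I_0,r}$ bound.'' An $\ell^p\to\ell^p$ norm bound on a single piece $\Phi_{E_j}$ does \emph{not} by itself produce a local/sparse estimate: the convolution kernel $\check E_j$ has no a priori spatial localization (the multiplier $E_j$ is merely bounded), so the standard CZ mechanism of pairing tails of the kernel against the stopping intervals is unavailable. The paper's extra input is the second-derivative bound $\lVert \frac{d^2}{d\beta^2}E_j\rVert_\infty \lesssim 2^{7j}$, which forces $|\check E_j(m)| \lesssim 2^{7j}/(1+m^2)$ and lets one truncate the kernel to $[-2^{10j},2^{10j}]$ at negligible cost; only then can one apply the finite-support lemma (Proposition~\ref{p:elementary}) trading off the interval length $N\simeq 2^{10j}$ against the $\ell^r\to\ell^{r'}$ norm of $T_{\check E_{j,0}}$, with $r$ near $2$ so that $N^{2/r-1}$ is beaten by $2^{-\varepsilon j}$. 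You should add a quantitative derivative (or decay) estimate on the kernel of the minor-arc piece before claiming a sparse bound for it.

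\textbf{Major arcs.} You want to ``transfer to the continuous Hilbert transform, apply a known continuous sparse bound, then sum over $a/q$ by exploiting cancellation in the Ramanujan-sum weights.'' The weights here are Gauss sums $S(B/Q)$, and the only leverage they give is the decay $|S(B/Q)|\lesssim 2^{-\varepsilon s}$ for $Q\approx 2^s$; there is no useful cancellation when you sum over the $\gtrsim 2^{2s}$ rationals $B/Q\in\mathcal R_s$ with absolute values. A naive sum of scalar sparse bounds over $B/Q$ therefore loses $2^{2s}$ and is not absorbed by $2^{-\varepsilon s}$. The paper's mechanism is different: it bundles the rationals with $Q\approx 2^s$ into a single Hilbert-space valued Calder\'on--Zygmund operator (acting on $\ell^2(\mathcal R_s)$-valued functions $f_s=\{\chi_{s-1}\ast(\mathrm{Mod}_{-B/Q}f)\}$), invokes the vector-valued sparse theorem, and only then de-vectorizes. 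The crucial step in de-vectorizing is the interpolation in \eqref{e:crux}: Bessel's inequality (Fourier supports of the pieces are disjoint) gives the $r=2$ endpoint $\langle f_s^{S,0}\rangle_{S,2}\lesssim \langle f\rangle_{2S,2}$ with no loss, the trivial sum gives the $r=1$ endpoint with a $2^{2s}$ loss, and interpolation gives a loss $2^{s(2-r)/r}$ that for $r$ close to $2$ is beaten by the Gauss-sum gain. Without some version of this orthogonality/interpolation argument, the arithmetic sum does not close, and your plan as stated would not produce a finite bound. (The stopping-time recursion itself is a legitimate alternative to the paper's ``sparse-norm triangle inequality plus a single dominating sparse form'' scaffolding, but the real content in both the major and minor arc estimates above is independent of that scaffolding.)
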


The beauty of sparse operators is that they are both positive and highly localized operators.  In particular, many of their mapping properties can be precisely analyzed.  As an immediate corollary \cite{MR3531367}*{\S 6} we obtain weighted inequalities, holding in an appropriate intersection of Muckenhoupt and reverse H\"older weight classes. 

\begin{corollary}\label{c:wtd} There exists $ 1< r <2 $ so that for all weights $ w^{-1},  w \in A_2 \cap RH_r $ we have 
\begin{equation*}
\lVert H_3  :  \ell ^2 (w) \mapsto \ell ^2 (w)\rVert \lesssim 1 .  
\end{equation*}
\end{corollary}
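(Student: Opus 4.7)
The plan is to derive the corollary from Theorem~\ref{t:main} via the well-developed weighted theory for sparse forms, as codified in \cite{MR3531367}*{\S 6}. Applying Theorem~\ref{t:main} replaces $\lvert\langle H_3 f, g\rangle\rvert$ by $C \Lambda_{\mathcal S, r, r}(f, g)$. Since $\ell^2(w)$ and $\ell^2(w^{-1})$ are dual under the unweighted pairing, the operator norm bound on $\ell^2(w)$ is equivalent to the bilinear inequality
\begin{equation*}
\Lambda_{\mathcal S, r, r}(f, g) \lesssim \lVert f \rVert_{\ell^2(w)} \lVert g \rVert_{\ell^2(w^{-1})},
\end{equation*}
required uniformly over sparse collections $\mathcal S$ and finitely supported $f, g$.

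For the bilinear sparse estimate, the standard approach is a per-interval H\"older inequality that transforms the $L^r$ average $\langle f \rangle_{S, r}$ into a weighted $L^1$ average. Writing $\lvert f\rvert^r = (\lvert f\rvert^r w^{r/2}) \cdot w^{-r/2}$ and applying H\"older with exponents $(2/r, 2/(2-r))$ yields
\begin{equation*}
\langle f \rangle_{S, r} \leq \langle f^2 w \rangle_{S, 1}^{1/2} \langle w^{-r/(2-r)} \rangle_{S, 1}^{(2-r)/(2r)},
\end{equation*}
with a symmetric inequality for $g$ after swapping $w$ and $w^{-1}$. The reverse H\"older hypothesis on $w$ and $w^{-1}$ (applied with the index matched to the H\"older exponent $r/(2-r)$) collapses the auxiliary factors to $\langle w^{-1}\rangle_{S, 1}^{1/2}$ and $\langle w\rangle_{S, 1}^{1/2}$ respectively, and the $A_2$ condition then yields $\langle w\rangle_S \langle w^{-1}\rangle_S \lesssim 1$. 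The sparse form is thereby dominated by
\begin{equation*}
\sum_{S \in \mathcal S} \lvert S\rvert \langle f^2 w\rangle_{S, 1}^{1/2} \langle g^2 w^{-1}\rangle_{S, 1}^{1/2}.
\end{equation*}

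The final step is the standard sparse summation. Sparsity permits the replacement $\lvert S\rvert \leq 4\lvert E_S\rvert$ on the pairwise disjoint sets $E_S$, after which Cauchy--Schwarz and the $\ell^2$-boundedness of the weighted maximal operators $M_w$ and $M_{w^{-1}}$, applied respectively to $f^2 w$ and $g^2 w^{-1}$, recover the product $\lVert f\rVert_{\ell^2(w)} \lVert g\rVert_{\ell^2(w^{-1})}$. The main obstacle is purely bookkeeping: the reverse H\"older index in the hypothesis must be compatible with the H\"older exponent $r/(2-r)$ arising in the first step, which forces the $r$ appearing in the corollary to be chosen in terms of (and in general distinct from) the $r$ produced by Theorem~\ref{t:main}, but still within $(1,2)$. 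Notably, no discrete-harmonic-analytic feature of $H_3$ enters beyond the sparse bound itself; the corollary is a purely formal consequence of Theorem~\ref{t:main}.
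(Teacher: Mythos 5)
The paper's entire treatment of this corollary is a one-line citation: it follows from Theorem~\ref{t:main} together with the machinery of \cite{MR3531367}*{\S 6} for passing from sparse form bounds to weighted estimates. Your proposal tries to reconstruct that machinery from scratch, and the skeleton is the right one (duality, per-interval H\"older, reverse H\"older, $A_2$, sparse summation). The duality reduction and the per-interval H\"older inequality $\langle f\rangle_{S,r}\leq \langle f^2 w\rangle_{S,1}^{1/2}\langle w^{-r/(2-r)}\rangle_{S,1}^{(2-r)/(2r)}$ are both correct.

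The genuine gap is in the final summation. After applying reverse H\"older and $A_2$, you are left with $\sum_{S}\lvert S\rvert\langle f^2 w\rangle_{S,1}^{1/2}\langle g^2 w^{-1}\rangle_{S,1}^{1/2}$. Setting $F=fw^{1/2}$ and $G=gw^{-1/2}$, this is exactly the $(2,2)$-sparse form $\sum_S \lvert S\rvert\langle F\rangle_{S,2}\langle G\rangle_{S,2}$, and your goal is to dominate it by $\lVert F\rVert_2\lVert G\rVert_2$. This is \emph{false} in general: after $\lvert S\rvert\leq 4\lvert E_S\rvert$ and Cauchy--Schwarz one is left needing $\lVert M(\lvert F\rvert^2)\rVert_1\lesssim \lVert F\rVert_2^2$, i.e.\ $L^1$-boundedness of the maximal function, which fails (e.g.\ take $F=\mathbf 1_{\{0\}}$ and a nested sparse family, where the sum grows with the number of scales while $\lVert F\rVert_2$ stays fixed). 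Invoking ``the $\ell^2$-boundedness of $M_w$ applied to $f^2 w$'' does not repair this: the Lebesgue factor $\lvert S\rvert$ and the Lebesgue average $\langle f^2w\rangle_{S,1}$ do not reorganize into $w$-measures and $w$-averages in the way such boundedness would require. The standard remedy, which is what \cite{MR3531367} does, is to stop the H\"older step at an intermediate exponent $p\in(r,2)$, writing $\lvert f\rvert^r=(\lvert f\rvert^p w^{p/2})^{r/p}\cdot w^{-r/2}$. This gives $\langle f\rangle_{S,r}\lesssim \langle F\rangle_{S,p}\langle w^{-1}\rangle_{S,1}^{1/2}$ under a reverse H\"older hypothesis with index $q=rp/(2(p-r))$ (slightly larger than $r/(2-r)$, but approaching it as $p\uparrow 2$). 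The resulting $(p,p)$-sparse form with $p<2$ \emph{is} bounded by $\lVert F\rVert_2\lVert G\rVert_2$, because $M_p F:=M(\lvert F\rvert^p)^{1/p}$ is bounded on $L^2$. Preserving that sliver of room $p<2$ is precisely what your argument loses. A secondary bookkeeping point you gloss over is that this analysis produces the weight class $A_{2/r_0}\cap RH_{q}$ (here $r_0$ is the exponent from Theorem~\ref{t:main}), and identifying this with the symmetric class $\{w : w,w^{-1}\in A_2\cap RH_r\}$ in the statement requires the factorization $A_p\cap RH_s=\{w:w^s\in A_{s(p-1)+1}\}$; your proposal acknowledges the need to retune $r$ but does not carry this out.
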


For instance, one can take $w(x) = [1+\lvert x\rvert]^{a}$, for $-\frac12 < a < \frac 12$.  

\smallskip 

The concept of a sparse bound originated in \cites{MR3085756,MR3521084,2015arXiv150105818L}, so it is new, in absolute terms, as well as this area.  On the other hand, the study of norm inequalities for discrete arithmetic operators has been under active investigation for over 30 years. However, \emph{no weighted inequalities have ever been proved in this setting. } 

\bigskip 
The subject of discrete norm inequalities of this type began with the breakthrough work of Bourgain \cites {MR937582,MR937581} on arithmetic ergodic theorems.  He proved, for instance, the following Theorem. 

\begin{priorResults}\label{t:bourgain}  Let $ P   $ be a polynomial on $\mathbb Z$ which takes integer values. Then the maximal function $ M_P$ below maps $ \ell ^{p} (\mathbb Z )$ to $ \ell ^{p} (\mathbb Z)$ for all $ 1< p < \infty $. 
\begin{equation*}
M _{P} f (x) = \sup _{N} \frac 1N \sum_{n=1} ^{N} \lvert  f (x- p (n))\rvert .  
\end{equation*}
\end{priorResults}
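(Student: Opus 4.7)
The plan is to follow Bourgain's circle-method strategy, reducing the maximal function to the Fourier side and exploiting an approximation of its symbol by a product of an arithmetic (Gauss sum) factor and a continuous analytic factor.

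First, I pass to the multiplier formulation. Writing $K_N(x) = N^{-1} \sum_{n=1}^{N} \mathbf{1}_{x = P(n)}$, the averages $A_N f = K_N * f$ have Fourier multipliers
\begin{equation*}
m_N(\xi) = \frac{1}{N}\sum_{n=1}^{N} e^{2\pi i P(n) \xi}, \qquad \xi \in \mathbb{T}.
\end{equation*}
Bounding $M_P = \sup_N |A_N|$ reduces by a standard argument to bounding the dyadic maximal function $\sup_{j\geq 0} |A_{2^j} f|$. The $\ell^\infty$ bound is trivial, so by interpolation the whole task is to prove the $\ell^2$ bound; the passage to $1<p<\infty$ then follows by duality/interpolation with $\ell^\infty$ and, for $p<2$, by a Calderón--Zygmund decomposition argument adapted to the singular discrete measure.

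Next, I decompose the torus into major and minor arcs around rationals $a/q$ with $(a,q)=1$ and $q\leq (\log N)^C$. On the minor arcs, Weyl's inequality for polynomial exponential sums gives $|m_N(\xi)| \lesssim (\log N)^{-\kappa}$ for some $\kappa=\kappa(\deg P)>0$. Summing over dyadic $N$ after squaring would diverge, so here I use a numerical lemma in the spirit of Bourgain: for any family of bounded multipliers $\{\phi_j\}$ on $\mathbb{T}$,
\begin{equation*}
\Bigl\lVert \sup_{j} |T_{\phi_j} f| \Bigr\rVert_{\ell^2} \lesssim (\log J) \cdot \max_j \lVert \phi_j \rVert_\infty \cdot \lVert f \rVert_{\ell^2},
\end{equation*}
applied dyadically in groups where the Weyl gain beats the logarithm.

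On the major arcs I use the standard asymptotic
\begin{equation*}
m_N(\xi) \approx G(a/q)\, \Phi_N(\xi - a/q), \qquad \xi \approx a/q,
\end{equation*}
where $G(a/q) = q^{-1}\sum_{r=1}^{q} e^{2\pi i P(r) a/q}$ is a complete Gauss-type sum satisfying $|G(a/q)|\lesssim q^{-\delta}$, and $\Phi_N(\eta) = \int_0^1 e^{2\pi i P(Nt)\eta}\,dt$ is the multiplier of the continuous polynomial averaging operator on $\mathbb{R}$, which is known to produce a maximal function bounded on $L^2(\mathbb{R})$. The remainder from this approximation is absorbed into the minor-arc treatment. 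This reduces the problem to controlling a \emph{multi-frequency maximal operator}: the supremum over $N$ of multipliers supported near many rational points $a/q$ simultaneously, weighted by $G(a/q)$.

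The core obstacle, and the most delicate part of the argument, is this multi-frequency $\ell^2$ bound. My plan is to group the rationals by denominator into dyadic blocks $q \sim 2^s$ and, on each block, use a Rademacher--Menshov style square-function/oscillation estimate to upgrade a fixed-$N$ bound to a supremum-in-$N$ bound at the cost of a factor polynomial in $s$. Because there are only $\lesssim 2^{2s}$ rationals with $q\sim 2^s$, orthogonality of the pieces $\Phi_N(\xi - a/q)$ (whose supports are essentially disjoint) gives an $\ell^2$ bound for the block; the Gauss sum gain $q^{-\delta}$ then guarantees summability over $s$. Combining the major-arc, minor-arc, and multi-frequency pieces yields the $\ell^2$ bound, and Bourgain's oscillation-based transference plus interpolation with the trivial $\ell^\infty$ estimate gives the full range $1<p<\infty$.
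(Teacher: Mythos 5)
This statement is Theorem~\ref{t:bourgain}, a \emph{prior result} quoted from Bourgain \cites{MR937582,MR937581,MR1019960}; the paper gives no proof of it (it only uses the associated minor-arc/Weyl estimates later, via \cite{MR1019960}), so there is nothing internal to compare against and your sketch must stand on its own as an outline of Bourgain's argument. In broad strokes (multiplier reduction, major/minor arcs, Gauss sum times continuous symbol, multi-frequency maximal estimate) it is the right skeleton, but two of your key steps have genuine gaps.

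First, the ``numerical lemma'' you invoke on the minor arcs is false as stated: for $J$ arbitrary multipliers bounded by $1$, the best general bound for $\lVert \sup_{j\le J}|T_{\phi_j}f|\rVert_{\ell^2}$ is of order $\sqrt{J}\,\lVert f\rVert_{\ell^2}$, and this is attained (take $\phi_j(\xi)=e(-n_j\xi)$ with distinct $n_j$ and $f=\delta_0$, so the supremum of the translates has $\ell^2$ norm $\sqrt{J}$). Logarithmic losses are available only for the highly structured multi-frequency multipliers (modulations of a single smooth symbol at separated frequencies), and proving that is the deep part of Bourgain's paper, not a generic fact one may assume; the standard repair on the minor arcs is instead to take major arcs with denominators $q\le N^{\varepsilon}$ (a power, not $(\log N)^C$), so Weyl gives a gain $2^{-\delta j}$ at scale $N=2^j$ and one can simply sum the $\ell^2$ operator norms over dyadic $j$. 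Relatedly, inside a major-arc block the supremum over the infinitely many $N$ does not commute with the orthogonality of the pieces $\Phi_N(\xi-a/q)$, and Rademacher--Menshov only treats finitely many scales; one needs the comparison with the continuous maximal operator together with Bourgain's multi-frequency maximal lemma (with its $(\log \#\mathcal R_s)$-type loss) to control the block before the Gauss-sum gain $q^{-\delta}$ is brought in. Second, your route to $1<p<2$ does not work: interpolation with the trivial $\ell^\infty$ bound only covers $p\ge 2$, and a Calder\'on--Zygmund decomposition ``adapted to the singular discrete measure'' is not available --- indeed weak $(1,1)$ for these operators is not known. Bourgain's passage below $p=2$ is a separate argument, interpolating the $\ell^2$-decaying error terms against crude $\ell^p$ bounds with controlled (at most polynomial in the arc parameter) growth; as written, your final paragraph asserts the conclusion rather than proving it.
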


Subsequently, attention turned to a broader understanding of Bourgain's work, including its implications for singular integrals and Radon transforms \cites{ISMW,MR1056560}. 
The fine analysis needed to obtain results in all $ \ell ^{p}$ spaces was developed by Ionescu and Wainger \cite{IW}. 
This theme is ongoing, with recent contributions in \cites{2015arXiv151207523M,2015arXiv151207524M,2015arXiv151207518M}, while other variants of these questions can be found in \cites{2015arXiv151206918K,MR2661174}.  

Initiated by Lerner \cite{MR3085756} as a remarkably simple proof of the so-called $A_2$ Theorem, the study of sparse bounds for operators has recently been an active topic.  The norm control provided in \cite{MR3085756} was improved to a pointwise control for Calder\'on-Zygmund operators in \cites{2015arXiv150105818L,MR3521084}. 
The paper \cite{2016arXiv160305317C} proved sparse bounds for the bilinear Hilbert transform, 
in the language of sparse forms, pointing to the applicability of sparse bounds outside the classical Calder\'on-Zygmund setting. That point of view is crucial for this paper.  
 
Two papers  \cites{2016arXiv160906364L,2016arXiv160908701K} have  proved sparse bounds for \emph{random} discrete operators,  a much easier  setting than the current one. 
A core technique of these papers reappears in \S \ref{s:minor}. 
Sparse bounds continue to be explored in a variety of settings 
\cites{2016arXiv160506401B,MR3531367,2016arXiv161103808K,2016arXiv161001531L,2015arXiv151005789H}.  

\bigskip 

We recall some aspects of known techniques in sparse bounds in \S \ref{s:general}.  These arguments and results are   formalized in a new notation, which makes the remaining quantitative proof more understandable. 
In particular, we define a `sparse norm' and formalize some of its properties.    
Our main theorem above is a sparse bound for a Fourier multiplier.  
In \S \ref{s:decompose}, we describe a decomposition of this Fourier multiplier, which has a familiar form within the discrete harmonic analysis literature.  The multiplier is decomposed into `minor' and `major' arc components, which require dramatically different methods to control.    
Concerning the minor arcs, there is one novel aspect of the decomposition, a derivative condition which has a precursor in \cite{2015arXiv151206918K}. Using this additional feature, the minor arcs are controlled in \S \ref{s:minor} through a variant of an argument in \cite{2016arXiv160906364L}.   
The major arcs are the heart of the matter, and are addressed in \S \ref{s:major}.

\bigskip 

An expert in the subject of discrete harmonic analysis will recognize that there are many possible extensions of the main result of this paper. We have chosen to present the main techniques in the simplest non-trivial example.    
Many variants and extensions to our main theorem hold, but all the ones we are aware of are more complicated than this one.

\section{Generalities} \label{s:general}

We collect some additional notation, beginning with the one term that is not standard, namely the sparse operators.  
Given an operator $ T $ acting on finitely supported functions on $ \mathbb Z $, and index $ 1\leq r, s < \infty $, we set 
\begin{equation}\label{e:SPN}
\lVert T  : \textup{Sparse} (r,s)\rVert
\end{equation}
to be the infimum over  constants $ C>0$ so that for all finitely supported functions $ f, g$ on $ \mathbb Z $,  
\begin{equation*}
\lvert  \langle Tf, g \rangle\rvert  \leq C \sup \Lambda _{r,s} (f,g), 
\end{equation*}
where the supremum is over all  sparse forms.  In particular, the `sparse norm' in \eqref{e:SPN} satisfies a triangle inequality. 
\begin{equation}\label{e:quasi}
\Bigl\lVert \sum_{j} T_j  : \textup{Sparse} (r,s) \Bigr\rVert 
\leq \sum_{j}\lVert  T_j  : \textup{Sparse} (r,s) \rVert.  
\end{equation}
We collect some quantitative estimates for different operators, hence 
the notation.  As the notation indicates, it suffices  to exhibit a single sparse bound for $ \langle Tf,g \rangle$.  

It is known that the Hardy-Littlewood maximal function 
\begin{equation*}
M _{\textup{HL}} f = \sup _{N} \frac 1 {2N+1} \sum_{j=-N} ^{N} \lvert  f (x-j)\rvert 
\end{equation*}
satisfies a sparse bound.  This is even a classical result. 

\begin{priorResults}\label{t:Max}
We have 
\begin{equation*}
\lVert M _{\textup{HL}} \;:\; \textup{Sparse} (1,1)\rVert \lesssim 1. 
\end{equation*}
\end{priorResults}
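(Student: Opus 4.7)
The plan is to execute the standard Calder\'on--Zygmund recursive stopping-time construction, which produces the sparse family directly from the averages of $f$ and $g$. I would first reduce to $f, g \geq 0$ supported in a common long interval $I_0$, and then build $\mathcal{S}$ recursively, starting with $I_0 \in \mathcal{S}$. For each $S \in \mathcal{S}$, define the stopping children $\mathrm{ch}(S)$ to be the maximal intervals $S' \subsetneq S$ satisfying
\begin{equation*}
\langle f \rangle _{S',1} > A \langle f \rangle _{S,1} \quad \text{or} \quad \langle g \rangle _{S',1} > A \langle g \rangle _{S,1}
\end{equation*}
for a fixed large constant $A$; then add $\mathrm{ch}(S)$ to $\mathcal{S}$ and iterate until $f$ and $g$ are exhausted. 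Sparsity of $\mathcal{S}$ follows at once from the weak-type $(1,1)$ inequality for $M_{\mathrm{HL}}$: $\bigl|\bigcup \mathrm{ch}(S)\bigr| \leq (2C_{\mathrm{HL}}/A)|S| < \tfrac{3}{4}|S|$ once $A$ is large, so $E_S := S \setminus \bigcup \mathrm{ch}(S)$ satisfies $|E_S| > |S|/4$, and the $E_S$ are pairwise disjoint by the nested structure of $\mathcal{S}$.

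The heart of the matter is the pointwise bound
\begin{equation*}
M_{\mathrm{HL}} f(x) \leq A \sum_{\substack{S \in \mathcal{S} \\ S \ni x}} \langle f \rangle _{S,1}.
\end{equation*}
Given any interval $J \ni x$, let $S(J)$ denote the smallest element of $\mathcal{S}$ containing $J$. If $\langle f \rangle _{J,1}$ exceeded $A \langle f \rangle _{S(J),1}$, then the maximal superinterval of $J$ inside $S(J)$ with the same property would itself be a stopping child of $S(J)$ lying in $\mathcal{S}$, contradicting the minimality of $S(J)$. Hence $\langle f \rangle _{J,1} \leq A \langle f \rangle _{S(J),1}$, and taking the supremum over $J \ni x$ (and passing from the supremum to the sum over ancestors of $x$ in $\mathcal{S}$) yields the displayed inequality. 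Pairing with $g$ and swapping the order of summation then produces
\begin{equation*}
\langle M_{\mathrm{HL}} f, g \rangle \leq A \sum_{S \in \mathcal{S}} \langle f \rangle _{S,1} \sum_{x \in S} g(x) = A\, \Lambda _{\mathcal{S}, 1, 1}(f,g),
\end{equation*}
the desired sparse bound.

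The only delicate point --- handled cleanly by stopping on $f$ and $g$ symmetrically --- is producing a hierarchy of intervals that is \emph{sparse} in the precise sense of the paper (pairwise-disjoint major subsets, each occupying a definite fraction of the corresponding interval) while at the same time controlling every one of the multi-scale averages arising in the definition of $M_{\mathrm{HL}}$. The weak-type $(1,1)$ inequality is precisely the ingredient that makes these two requirements compatible.
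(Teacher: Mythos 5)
Your approach is the standard Calder\'on--Zygmund stopping-time argument, which is indeed the right route (the paper cites this as classical and offers no proof). The skeleton is correct, but there is one genuine technical gap that you identify in your last paragraph and then wave away with the wrong diagnosis (``stopping on $f$ and $g$ symmetrically'' is not where the delicacy lies).

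The problem is that ``maximal intervals $S'\subsetneq S$'' satisfying the stopping condition are not pairwise disjoint unless you restrict to a dyadic grid: two overlapping intervals can each be maximal for a large-average condition even though their union is not (the union can have a smaller average). With non-dyadic stopping children, $\mathcal{S}$ is \emph{not} nested and the sets $E_S = S\setminus\bigcup\mathrm{ch}(S)$ need not be pairwise disjoint, so the collection produced does not satisfy the paper's definition of sparse. If, on the other hand, you take the stopping children to be maximal \emph{dyadic} sub-intervals (which restores disjointness and nesting automatically), then the pointwise-bound step breaks: for an arbitrary interval $J\ni x$ there may be no small element of $\mathcal{S}$ containing $J$, and the ``maximal superinterval of $J$ inside $S(J)$ with the same property'' is in general not dyadic, hence not a stopping child, so the contradiction you invoke does not arise. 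The standard repair is to run your argument verbatim for the dyadic maximal function $M^{d}$ (where everything is clean) and then dominate $M_{\mathrm{HL}}$ by a bounded number of shifted-grid dyadic maximal functions, using shifted dyadic grids as in the reference \cite{MR3065022}*{Lemma 2.5} that the paper itself invokes in the proof of Lemma~\ref{l:eta}; alternatively, keep arbitrary stopping intervals but perform a Vitali-type selection among them before defining the $E_S$. Either patch is short, but as written the proof does not produce a sparse family in the sense required.
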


The following is a deep fact about sparse bounds that is at the core of our main theorem.  
\begin{priorResults}\label{t:sparse}\cites{MR3521084,2015arXiv150105818L} Let $ T_K $ be the convolution with any Calder\'on-Zygmund kernel.  For a Hilbert space $ \mathcal H$, and viewing $ T_K$ as acting on $ \mathcal H$ valued functions, we have the sparse bound 
\begin{equation*}
 \lVert T_K    : \textup{Sparse} (1,1)\rVert < \infty .  
\end{equation*}
We make the natural extension of the definition of the sparse form to vector valued functions, namely 
$ \langle f \rangle_I = \lvert  I\rvert ^{-1} \sum_{x\in I} \lVert f\rVert _{\mathcal H} $.  
\end{priorResults}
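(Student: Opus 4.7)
The plan is to follow the by-now standard template of Lerner and Lacey, producing the sparse collection by a recursive stopping-time construction; the vector-valued aspect is absorbed into the classical Calder\'on--Zygmund theory for Hilbert-space-valued inputs and costs essentially nothing. Fix an interval $Q_0$ containing the supports of $f$ and $g$. The goal is to place $Q_0$ into the sparse collection $\mathcal S$, peel off a single-scale contribution of size $\lvert Q_0\rvert \langle \lVert f\rVert_{\mathcal H}\rangle_{Q_0,1} \langle \lVert g\rVert_{\mathcal H}\rangle_{Q_0,1}$, and then recurse inside a carefully chosen family of stopping children.

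The central tool is the grand maximal truncation operator
\begin{equation*}
\mathcal M_T f(x) \;:=\; \sup_{I \ni x}\; \sup_{\xi \in I}\; \lVert T_K (f \mathbf{1}_{\mathbb Z \setminus 3I})(\xi)\rVert_{\mathcal H}.
\end{equation*}
A Cotlar-type comparison yields $\mathcal M_T f \lesssim M_{\textup{HL}}(\lVert T_K f\rVert_{\mathcal H}) + M_{\textup{HL}}(\lVert f\rVert_{\mathcal H})$. Combined with Theorem \ref{t:Max} and the vector-valued weak $(1,1)$ bound for $T_K$ on $\mathcal H$-valued inputs---proved by the usual Calder\'on--Zygmund decomposition of $\lVert f\rVert_{\mathcal H}$ with the $\mathcal H$-triangle inequality replacing the scalar one in the treatment of the bad function---this delivers $\lVert \mathcal M_T\rVert_{L^1 \to L^{1,\infty}} \lesssim 1$.

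Choose a large constant $A$ and let $\mathcal Q$ be the maximal subintervals $J \subsetneq Q_0$ satisfying at least one of the stopping conditions $\langle \lVert f\rVert_{\mathcal H}\rangle_{J,1} > A\langle \lVert f\rVert_{\mathcal H}\rangle_{Q_0,1}$, $\langle \lVert g\rVert_{\mathcal H}\rangle_{J,1} > A \langle \lVert g\rVert_{\mathcal H}\rangle_{Q_0,1}$, or $\inf_{J} \mathcal M_T(f \mathbf{1}_{3Q_0}) > A \langle \lVert f\rVert_{\mathcal H}\rangle_{Q_0,1}$. For $A$ large, the three weak $(1,1)$ inequalities combine to give $\sum_{J\in\mathcal Q}\lvert J\rvert \le \tfrac12\lvert Q_0\rvert$, so $E_{Q_0} := Q_0 \setminus \bigcup_{J\in\mathcal Q} J$ has $\lvert E_{Q_0}\rvert \ge \tfrac12\lvert Q_0\rvert$. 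This is precisely the disjointness needed for sparseness after iteration.

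For the single-scale estimate, decompose $f\mathbf 1_{Q_0} = f\mathbf 1_{E_{Q_0}} + \sum_{J\in\mathcal Q} f\mathbf 1_{J}$, apply $T_K$, and pair against $g$ using the pointwise $\mathcal H$-inner product. On $E_{Q_0}$ the $\mathcal H$-norm of $T_K(f\mathbf 1_{E_{Q_0}})(x)$ is controlled pointwise by $A\langle \lVert f\rVert_{\mathcal H}\rangle_{Q_0,1}$ via the $\mathcal M_T$ stopping rule; summing against $g\mathbf 1_{E_{Q_0}}$ and invoking the $g$-stopping average yields the desired single-scale sparse term. The remaining contributions $T_K(f\mathbf 1_J)$ are absorbed into the recursion applied to each $J \in \mathcal Q$, building up $\mathcal S$ level by level. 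The main technical obstacle is the weak $(1,1)$ estimate for $\mathcal M_T$ in the $\mathcal H$-valued setting: the Cotlar inequality is the workhorse, and the vector-valued Calder\'on--Zygmund theorem supplies the companion weak $(1,1)$ for $T_K$ itself. Once these are in hand, the stopping/recursion mechanics are routine and yield $\lvert \langle T_K f, g\rangle\rvert \lesssim \Lambda_{\mathcal S, 1,1}(f,g)$.
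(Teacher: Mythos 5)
The paper does not prove this statement; it is cited as a prior result to Conde-Alonso--Rey and to Lacey's elementary proof of the $A_2$ bound, so there is no in-paper argument to compare against. Your sketch reproduces precisely the template of those references: grand maximal truncation operator $\mathcal M_T$, a Cotlar-type inequality giving its weak $(1,1)$ bound, a stopping-time construction driven by the $f$- and $g$-averages together with the level set of $\mathcal M_T$, and recursion to build the sparse family. Your observation that the passage to $\mathcal H$-valued inputs is essentially free is also correct and is the salient point: the kernel $K$ is scalar and acts diagonally, so the Calder\'on--Zygmund decomposition of the scalar function $\lVert f\rVert_{\mathcal H}$, the mean-zero cancellation against $\langle f\rangle_Q\in\mathcal H$, and the Cotlar argument all go through with $\lvert\cdot\rvert$ replaced by $\lVert\cdot\rVert_{\mathcal H}$.

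One place to tighten when writing this out fully: the single-scale estimate as phrased controls $T_K(f\mathbf 1_{E_{Q_0}})$ pointwise on $E_{Q_0}$ via the $\mathcal M_T$ stopping rule, but that is not what $\mathcal M_T$ measures. What the stopping condition actually gives, for $x\in Q_0$ belonging to a stopping child $J$, is a bound on the far truncation $\lVert T_K(f\mathbf 1_{3Q_0\setminus 3J})(x)\rVert_{\mathcal H}\lesssim A\langle\lVert f\rVert_{\mathcal H}\rangle_{Q_0}$; the remainder $T_K(f\mathbf 1_{3J})$ is what gets fed into the recursion, and for $x\in E_{Q_0}$ the near-diagonal contribution $T_K(f\mathbf 1_{3I})(x)$, $I\ni x$ small, needs a separate (easy) bound by the size condition on $K$ and $M_{\textup{HL}}(\lVert f\rVert_{\mathcal H})$. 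This is a standard bookkeeping point and not a genuine gap, but as written the single-scale paragraph asserts a pointwise bound that the stopping rule does not directly deliver.
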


Recall that $ K$ is a \emph{Calder\'on-Zygmund kernel on $ \mathbb R $} if $ K  : \mathbb R \setminus \{0\} \to \mathbb C $ 
satisfies 
\begin{align} \label{e:CZK}
 \sup _{ x \in \mathbb R \setminus \{0\}} \lvert   x K (x)\rvert + \lvert   x ^2  \tfrac d {dx}K (x)\rvert< \infty ,
\end{align}
and $ T_K$ acts boundedly from $ L^2 $ to $ L^2 $.  The kernels that we will encounter are small perturbations of $ 1/x$. 
Restricting a Calder\'on-Zygmund kernel to the integers, we have a kernel which satisfies Theorem~\ref{t:sparse}.

\smallskip 

In a different direction, we will accumulate a range of sparse operator bounds at different points of our argument.  Yet there is, in a  sense, a unique maximal sparse operator, once a pair of functions $ f, g$ are specified.  
Thus we need not specify the exact sparse form which proves our main theorem.  

\begin{lemma}\label{l:oneFormToRuleThem} \cite{2016arXiv161001531L}*{Lemma 4.7} 
Given finitely supported functions $ f, g$ and choices of $ 1\leq r, s< \infty $, there is a sparse form $ \Lambda ^{\ast} _{r,s}$, and constant $ C>0$ so that for any other sparse form $ \Lambda _{r,s}$ we have 
\begin{equation*}
\Lambda _{r,s} (f,g) \leq C \Lambda ^{\ast} _{r,s} (f,g).  
\end{equation*}
\end{lemma}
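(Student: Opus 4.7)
The strategy is to construct a single sparse collection $\mathcal{S}^{\ast}$, tailored to the pair $(f,g)$, whose associated sparse form pointwise dominates every other sparse form evaluated on $(f,g)$. The collection $\mathcal{S}^{\ast}$ will be built by a standard Calder\'on--Zygmund stopping-time procedure applied simultaneously to $\lvert f\rvert^{r}$ and $\lvert g\rvert^{s}$.

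First I would fix a large enough interval $I_{0}$ containing the (finite) supports of $f$ and $g$, and place it into the top level of $\mathcal{S}^{\ast}$. Given a generation $Q \in \mathcal{S}^{\ast}$, I would declare the next generation of stopping children to be the maximal dyadic (in the usual sense on $\mathbb Z$) subintervals $Q' \subsetneq Q$ for which either
\begin{equation*}
\langle f \rangle_{Q',r} > 4 \langle f \rangle_{Q,r} \quad \text{or} \quad \langle g \rangle_{Q',s} > 4 \langle g \rangle_{Q,s}.
\end{equation*}
Iterating, the total length of stopping children of $Q$ is at most $\tfrac{1}{2}\lvert Q\rvert$ by a Chebyshev-type argument (comparing averages on the $r$th and $s$th powers), so the sets $E_{Q} := Q \setminus \bigcup \{Q' : Q' \text{ stopping child of } Q\}$ satisfy $\lvert E_{Q}\rvert \geq \tfrac{1}{2}\lvert Q\rvert$ and are pairwise disjoint. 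This verifies that $\mathcal{S}^{\ast}$ is a sparse collection.

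Next, given any other sparse collection $\mathcal{S}$, for each $S \in \mathcal{S}$ let $Q(S)$ be the minimal element of $\mathcal{S}^{\ast}$ containing $S$ (enlarging $I_{0}$ if necessary, so this is well-defined). By the stopping rule defining $\mathcal{S}^{\ast}$, one has $\langle f \rangle_{S,r} \leq 4 \langle f \rangle_{Q(S),r}$ and $\langle g \rangle_{S,s} \leq 4 \langle g \rangle_{Q(S),s}$, since otherwise $S$ would lie inside a proper descendant of $Q(S)$ in $\mathcal{S}^{\ast}$. Grouping by $Q = Q(S)$ gives
\begin{equation*}
\Lambda_{\mathcal{S},r,s}(f,g) \leq 16 \sum_{Q \in \mathcal{S}^{\ast}} \langle f \rangle_{Q,r} \langle g \rangle_{Q,s} \sum_{\substack{S \in \mathcal{S} \\ Q(S)=Q}} \lvert S \rvert.
\end{equation*}

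Finally, the sparsity of $\mathcal{S}$ controls the inner sum. The sets $E_{S} \subset S$ are pairwise disjoint subsets of $Q$, and $\lvert E_{S}\rvert > \tfrac{1}{4}\lvert S\rvert$, so $\sum_{S: Q(S)=Q} \lvert S\rvert \leq 4 \sum_{S: Q(S)=Q} \lvert E_{S}\rvert \leq 4\lvert Q \rvert$. Substituting yields $\Lambda_{\mathcal{S},r,s}(f,g) \leq 64\, \Lambda_{\mathcal{S}^{\ast},r,s}(f,g)$, which is the desired bound with $\Lambda^{\ast}_{r,s} := \Lambda_{\mathcal{S}^{\ast},r,s}$. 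The main technical nuisance, rather than any deep obstacle, is handling intervals that are not dyadic: one either passes to a standard dyadic grid on $\mathbb{Z}$ at the cost of an absolute constant (since every integer interval is contained in a comparable dyadic one), or one observes that the stopping argument goes through verbatim with "maximal subinterval" in place of "maximal dyadic subinterval." Either variant gives the claim with a universal constant $C$.
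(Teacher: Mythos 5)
The paper itself does not prove this lemma; it is cited from \cite{2016arXiv161001531L}. Your stopping-time construction of $\mathcal S^{\ast}$ from the pair $(f,g)$, followed by pigeonholing each $S$ of a competing sparse collection to its minimal $\mathcal S^{\ast}$-ancestor $Q(S)$ and then using the sparsity of $\mathcal S$ to pack $\sum_{S:Q(S)=Q}\lvert S\rvert\lesssim\lvert Q\rvert$, is the standard argument and is, as far as the content goes, the same mechanism the cited source uses. The Chebyshev count giving $\frac12$-sparsity of $\mathcal S^{\ast}$ and the comparison $\langle f\rangle_{S,r}\leq 4\langle f\rangle_{Q(S),r}$ via maximality of stopping children are both correct, and you are right that the non-dyadic intervals are a bookkeeping issue handled by shifted dyadic grids.

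There is one genuine defect in the write-up, though it is repairable. The parenthetical ``enlarging $I_{0}$ if necessary, so this is well-defined'' is circular: $\mathcal S^{\ast}$, and in particular its top interval $I_{0}$, must be fixed \emph{before} the adversarial sparse collection $\mathcal S$ is presented, so you cannot enlarge $I_{0}$ afterwards to swallow an $S\in\mathcal S$ that happens to be larger than $I_{0}$. Moreover, assigning all such large $S$ the value $Q(S)=I_{0}$ does not work either, since the packing $\sum_{S:Q(S)=I_{0}}\lvert S\rvert\lesssim\lvert I_{0}\rvert$ then fails. The fix is to augment $\mathcal S^{\ast}$ with the full dyadic tower of ancestors $I_{0}\subset I_{1}\subset I_{2}\subset\cdots$ of the top interval; this tower is $\frac12$-sparse with $E_{I_{k}}:=I_{k}\setminus I_{k-1}$, and its addition preserves disjointness with the $E_{Q}$ you already built. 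For a (shifted-dyadic) $S$ containing $I_{0}$, set $Q(S)$ to be the smallest $I_{k}$ with $S\subset I_{k}$; then $\lvert I_{k}\rvert\leq 2\lvert S\rvert$, and since $\textup{supp}(f)\cup\textup{supp}(g)\subset I_{0}\subset S\subset I_{k}$ one gets
$\langle f\rangle_{S,r}=\bigl(\lvert I_{k}\rvert/\lvert S\rvert\bigr)^{1/r}\langle f\rangle_{I_{k},r}\leq 2^{1/r}\langle f\rangle_{Q(S),r}$,
and similarly for $g$, while the packing estimate $\sum_{S:Q(S)=I_{k}}\lvert S\rvert\leq 4\lvert I_{k}\rvert$ holds exactly as in your argument because all such $S$ sit inside $I_{k}$. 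With this amendment the proof is complete.
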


A couple of elementary estimates, which we will appeal to, are in this next proposition. 
The use of these inequalities in the sparse bound setting appeared in \cite{2016arXiv160906364L}. 
\begin{proposition}\label{p:elementary}
Let $ T _{K} f (x) = \sum_{n} K (n) f (x-n)$ be convolution with kernel $ K$.  Assuming that $ K$ is finitely supported 
on interval $ [-N,N]$
we have  the  inequalities 
\begin{align} \label{e:elem2}
\lVert T_K  : \textup{Sparse} (r,s)\rVert& \lesssim  N ^{1/r+1/s-1} \lVert T _{K}  :  \ell ^r \mapsto  \ell ^{s'}\rVert , \qquad 1\leq r, s < \infty . 
\end{align}
\end{proposition}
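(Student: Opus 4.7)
The plan is to exhibit a single-scale sparse collection at the scale $N$ of the support of $K$, and then to read off the desired sparse bound from the assumed $\ell^r\to\ell^{s'}$ operator bound on that scale. The key observation is that convolution with a kernel of compact support localizes the bilinear form $\langle T_K f,g\rangle$ to interactions at scale $N$, so a single generation of intervals of length $\sim N$ should suffice.

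Concretely, I would partition $\mathbb{Z}$ into consecutive intervals $\{I_k\}$ of length $3N$, and let $I_k^*$ denote the interval of length $9N$ concentric with $I_k$. The family $\mathcal{S}:=\{I_k^* : k\in\mathbb{Z}\}$ is sparse: take $E_{I_k^*}:=I_k$, which are pairwise disjoint with $|I_k|=|I_k^*|/3>|I_k^*|/4$.

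The next step is to exploit the support hypothesis on $K$: since $K$ vanishes outside $[-N,N]$, for every $x\in I_k$ we have $T_K f(x)=T_K(f\mathbf{1}_{I_k^*})(x)$, and consequently
\begin{equation*}
|\langle T_K f,g\rangle|\le\sum_k |\langle T_K(f\mathbf{1}_{I_k^*}),g\mathbf{1}_{I_k}\rangle|.
\end{equation*}
H\"older's inequality followed by the hypothesis $\lVert T_K:\ell^r\to\ell^{s'}\rVert<\infty$ bounds each summand by $\lVert T_K:\ell^r\to\ell^{s'}\rVert\cdot\lVert f\mathbf{1}_{I_k^*}\rVert_{\ell^r}\cdot\lVert g\mathbf{1}_{I_k}\rVert_{\ell^s}$, and the trivial bound $\lVert g\mathbf{1}_{I_k}\rVert_{\ell^s}\le\lVert g\mathbf{1}_{I_k^*}\rVert_{\ell^s}$ lets me convert both $\ell^p$-norms into averages over $I_k^*$ via $\lVert h\mathbf{1}_I\rVert_{\ell^p}=|I|^{1/p}\langle h\rangle_{I,p}$. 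Each term thereby becomes at most
\begin{equation*}
\lVert T_K:\ell^r\to\ell^{s'}\rVert\cdot|I_k^*|^{1/r+1/s}\langle f\rangle_{I_k^*,r}\langle g\rangle_{I_k^*,s}
=\lVert T_K:\ell^r\to\ell^{s'}\rVert\cdot|I_k^*|^{1/r+1/s-1}\cdot|I_k^*|\langle f\rangle_{I_k^*,r}\langle g\rangle_{I_k^*,s}.
\end{equation*}

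Summing in $k$ and recognizing the result as $\Lambda_{\mathcal{S},r,s}(f,g)$ then produces the asserted bound with prefactor $\lesssim N^{1/r+1/s-1}\lVert T_K:\ell^r\to\ell^{s'}\rVert$, since $|I_k^*|\asymp N$. There is no substantive obstacle here; the entire argument is bookkeeping organized around the support of $K$. The only points requiring a bit of care are (i) choosing the length of $I_k^*$ large enough to absorb the shift by $K$ (length $3|I_k|$ is safely enough), and (ii) verifying the sparsity inequality $|E_{I_k^*}|>|I_k^*|/4$ once this choice has been fixed.
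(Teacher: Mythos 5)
Your proof is correct and follows essentially the same route as the paper's: partition $\mathbb{Z}$ into intervals of length $\sim N$, use the support of $K$ to localize $\langle T_K f,g\rangle$ to a sum over dilated intervals, apply H\"older with the $\ell^r\to\ell^{s'}$ operator norm, and convert $\ell^p$-norms to averages to read off the $N^{1/r+1/s-1}$ factor. The only cosmetic difference is that you use lengths $3N$ and $9N$ and spell out the sparse sets $E_{I_k^*}=I_k$ explicitly, whereas the paper uses a $2N$/$6N$ partition and an ``every other interval'' reduction; both produce the same single-scale sparse form.
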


The two instances of the above inequality we will use are $ (r,s)= (1,1), (2,2)$. In the latter case, one should observe that the power of $ N$ above is zero.

\begin{proof}
Let $ \mathcal I$ be a partition of $ \mathbb Z $ into intervals of length $ 2 N$. Assume that 
if $ I, I' \in \mathcal I$ with $ \textup{dist} (I,I')\leq 1$, then either $ f \mathbf 1_{I} $ or $ f \mathbf 1_{I'}$ are identically zero.  Then, 
\begin{align*}
\lvert \langle  T_K f, g \rangle\rvert  & 
\leq \sum_{I\in \mathcal I}  \langle f \mathbf 1_{I} , T ^{\ast}_K (g \mathbf 1_{3I}) \rangle 
\\
& \leq   
\lVert T _{K}  :  \ell ^r \mapsto  \ell ^{s'}\rVert 
\sum_{I\in \mathcal I} \lVert f \mathbf 1_{3I}\rVert_r 
\lVert g \mathbf 1_{3I}\rVert_s  
\\
& \lesssim 
N ^{1/r+1/s-1} \lVert T _{K}  :  \ell ^r \mapsto  \ell ^{s'}\rVert 
\sum_{I\in \mathcal I}  \lvert  3I\rvert \cdot  \langle f  \rangle _{3I,r} 
 \langle g \rangle _{3I,s}.
\end{align*}
\end{proof}

The definition of sparse collections has a useful variant.  Let $ 0< \eta \leq \frac 14$.  
We say a collection of intervals $ \mathcal S $ is \emph{$ \eta $-sparse} if there are subsets $ E_S \subset S \subset \mathbb Z $ with (a) $ \lvert  E_S\rvert > \eta  \lvert  S\rvert  $, uniformly in $ S\in \mathcal S$, and  (b) the sets $ \{E_S  :  S\in \mathcal S\}$ are pairwise disjoint.

\begin{lemma}\label{l:eta}
For each $ f, g$ there is a $ \frac 12$-sparse form $ \Lambda $ so that 
for all    $ \eta $-sparse forms $ \Lambda ^{\eta } $,  we have 
\begin{equation}\label{e:eta}
 \Lambda ^{\eta } (f,g) \lesssim \eta ^{-1} \Lambda (f,g), \qquad  0< \eta < 1/4. 
\end{equation}
\end{lemma}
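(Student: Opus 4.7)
The plan is to construct a single $\tfrac12$-sparse collection $\mathcal T=\mathcal T(f,g,r,s)$ so that, for every $\eta$-sparse form $\Lambda^\eta$,
\[
\Lambda^\eta(f,g) \;\lesssim\; \eta^{-1}\,\Lambda_{\mathcal T}(f,g).
\]
The key preliminary observation is that the $\eta$-sparsity of any $\mathcal S^\eta$ yields a Carleson packing bound: for every interval $I\subseteq\mathbb Z$,
\[
\sum_{S\in\mathcal S^\eta,\ S\subseteq I}|S| \;\leq\; \eta^{-1}\,|I|,
\]
since the disjoint sets $\{E_S\}_{S\subseteq I}$ sit inside $I$ with $|E_S|\geq \eta|S|$.

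To construct $\mathcal T$, I would run a Calder\'on--Zygmund stopping time on $f,g$ inside a shifted dyadic grid on $\mathbb Z$: starting from an interval $T_0\supseteq \mathrm{supp}(f)\cup\mathrm{supp}(g)$, declare the stopping children of $T\in\mathcal T$ to be the maximal dyadic $T'\subsetneq T$ at which $\langle f\rangle_{T',r}>A\langle f\rangle_{T,r}$ or $\langle g\rangle_{T',s}>A\langle g\rangle_{T,s}$. By the dyadic weak-$(1,1)$ inequality applied to $|f|^r$ and $|g|^s$, choosing $A$ large enough forces the stopping children to have total measure strictly less than $\tfrac12|T|$, making $\mathcal T$ a $\tfrac12$-sparse collection. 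For each $S\in\mathcal S^\eta$, the three-shifted-grids principle supplies a dyadic interval $D(S)$ (from one of three grids) with $S\subseteq D(S)$ and $|D(S)|\leq 3|S|$; after partitioning $\mathcal S^\eta$ according to the chosen grid, let $T(S)\in\mathcal T$ be the minimal stopping interval of that grid containing $D(S)$. The dyadic nesting $D(S)\subseteq T(S)$ together with the stopping rule then yields
\[
\langle f\rangle_{S,r}\;\leq\; 3^{1/r}\langle f\rangle_{D(S),r}\;\leq\; 3^{1/r}A\langle f\rangle_{T(S),r},
\]
and analogously for $g$. Summing over $S$ and grouping by $T\in\mathcal T$,
\[
\Lambda^\eta(f,g) \;\lesssim\; A^2 \sum_{T\in\mathcal T}\langle f\rangle_{T,r}\langle g\rangle_{T,s}\sum_{S\,:\,T(S)=T}|S| \;\leq\; A^2\eta^{-1}\,\Lambda_{\mathcal T}(f,g),
\]
by the Carleson packing inequality $\sum_{S\,:\,T(S)=T}|S|\leq \eta^{-1}|T|$.

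The main obstacle is the matching step: directly mapping a general interval $S\in\mathcal S^\eta$ to the smallest stopping interval containing $S$ does not control $\langle f\rangle_{S,r}$ by $\langle f\rangle_{T(S),r}$, because $S$ can straddle one or more stopping children and thereby pick up large $f$-mass from inside them. The three-shifted-dyadic-grids trick bypasses this by routing each $S$ through a comparable dyadic enclosure in whose grid all relevant containments are automatic. A minor final technicality is that the three grids produce three $\tfrac12$-sparse collections whose contributions are summed; these may be consolidated into a single $\tfrac12$-sparse form by absorbing the factor $3$ into the implicit constant and appealing to Lemma~\ref{l:oneFormToRuleThem} or passing to a common refinement.
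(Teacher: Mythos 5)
Your argument is correct in its core strategy but takes a genuinely different route from the paper. The paper's proof (a) reduces to dyadic $\mathcal S^\eta$ via the three-shifted-grids trick, (b) reads off the Carleson packing constant $\eta^{-1}$, (c) cites the standard fact that a Carleson family of constant $\eta^{-1}$ splits into $O(\eta^{-1})$ many $\tfrac12$-sparse subfamilies $\mathcal S_1,\dots,\mathcal S_J$, and then (d) closes by invoking Lemma~\ref{l:oneFormToRuleThem} — the existence of one dominant sparse form $\Lambda$ for fixed $(f,g)$ — to absorb all $J\lesssim\eta^{-1}$ pieces into a single $\Lambda$. You instead build a Calder\'on--Zygmund stopping tree $\mathcal T$ directly from $(f,g)$, route each $S\in\mathcal S^\eta$ to the minimal stopping interval $T(S)\supseteq D(S)$, control $\langle f\rangle_{S,r}$ by $\langle f\rangle_{T(S),r}$ via the stopping rule, and finish with the packing bound $\sum_{S\,:\,T(S)=T}|S|\leq\eta^{-1}|T|$. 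Your route is more self-contained — it essentially unwinds the content of Lemma~\ref{l:oneFormToRuleThem} inline rather than quoting it — while the paper's is shorter given that the lemma is already available. Both are valid, and both pay the same $\eta^{-1}$ price through the Carleson packing estimate; they differ in \emph{where} that packing is cashed in.

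There is one genuine gap you should close: the map $S\mapsto T(S)$ is only defined when $D(S)\subseteq T_0$. The tree $\mathcal T$ (hence its root $T_0$) must be fixed \emph{before} $\mathcal S^\eta$ is revealed, since the lemma requires a single $\Lambda$ that works for every $\eta$-sparse form. But $\mathcal S^\eta$ can contain intervals $S$ meeting $\mathrm{supp}\,f\cup\mathrm{supp}\,g$ with $D(S)$ strictly larger than $T_0$, for which $T(S)$ is undefined. One fix: within a fixed grid, such $D(S)$ must satisfy $D(S)\supsetneq T_0$ by dyadic nesting; since $f,g$ vanish off $T_0$, one gets
\begin{equation}
|S|\,\langle f\rangle_{S,r}\langle g\rangle_{S,s}\ \lesssim\ \Bigl(\tfrac{|T_0|}{|S|}\Bigr)^{\frac1r+\frac1s-1}|T_0|\,\langle f\rangle_{T_0,r}\langle g\rangle_{T_0,s},
\end{equation}
and by $\eta$-sparsity at most $O(\eta^{-1})$ intervals $S$ of each dyadic scale $\sim 2^k|T_0|$ meet $T_0$. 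The resulting geometric series in $k$ converges whenever $\tfrac1r+\tfrac1s>1$, which covers $r=s<2$ (the case the paper needs, including $r=s=1$ in Lemma~\ref{l:new}), and the remaining contribution is a single-interval sparse form over $\{T_0\}$. Note this tail estimate does degenerate at $\tfrac1r+\tfrac1s=1$, whereas the paper's Carleson-splitting argument is indifferent to $(r,s)$; this is a small sense in which the paper's route is more robust.
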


\begin{proof}
Let $ \mathcal S ^{\eta }$ be the sparse collection of intervals associated to $ \Lambda ^{\eta}$. 
Using shifted dyadic grids \cite{MR3065022}*{Lemma 2.5}, 
we can, without loss of generality, assume that $ \mathcal S ^{\eta }$ consists of dyadic intervals. It follows that 
we have the uniform Carleson measure estimate 
\begin{equation*}
\sum_{J\in \mathcal S  :  J\subset I} \lvert  J\rvert \lesssim \eta ^{-1} \lvert  I\rvert, \qquad I\in \mathcal S  ^{\eta }.   
\end{equation*}
Then, for an integer $ J \lesssim \eta ^{-1} $, we can decompose $ \mathcal S ^{\eta }$ into 
subcollections $ \mathcal S _{j}$, for $ 1\leq j \leq J$, so that each collection $ \mathcal S _j$ is $ \frac 12 $-sparse.  

Now, with $ f, g$ fixed,  by Lemma~\ref{l:oneFormToRuleThem}, there is a single sparse operator $ \Lambda $ so that uniformly in $ 1\leq j \leq J $, we have 
\begin{equation*}
\Lambda _{\mathcal S_j} (f,g) \lesssim \Lambda (f,g). 
\end{equation*}
This completes our proof. 
\end{proof}

A variant of the sparse operator will appear, one with a `long tails' average. 
Define 
\begin{equation}\label{e:new}
\{  f \} _{S}  =  \frac 1 {\lvert  S\rvert } \sum_{x } \frac {\lvert  f (x)\rvert } {\left( 1+\frac{ \textup{dist} (x, S)}{|S|} \right)^{3 }}.
\end{equation}

\begin{lemma}\label{l:new} 
For all finitely supported $ f, g$, there is a sparse operator $ \Lambda $ so that for any sparse 
collection $ \mathcal S_0$, there holds 
\begin{equation}\label{e:new<}
\sum_{S\in \mathcal S_0} \lvert  S\rvert \{f\}_S \{g\}_S \lesssim \Lambda (f,g).  
\end{equation}
\end{lemma}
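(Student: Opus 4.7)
The plan is to reduce the long-tails averages $\{f\}_S$ and $\{g\}_S$ to ordinary $L^1$ averages over dyadic enlargements of $S$, and then invoke Lemma~\ref{l:eta} together with Lemma~\ref{l:oneFormToRuleThem} to convert the resulting expression into a single universal sparse form depending only on $f$ and $g$.

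First, I would decompose the weight $(1+\textup{dist}(x,S)/|S|)^{-3}$ in~\eqref{e:new} along the dyadic annuli $A_k(S) = \{x : \textup{dist}(x,S)/|S| \in [2^{k-1}, 2^k)\}$ for $k \geq 1$, together with $A_0(S) = \{x : \textup{dist}(x,S) < |S|\}$. Since $A_k(S) \subset 2^{k+1}S$ and $|A_k(S)| \lesssim 2^k |S|$, this yields the pointwise bound
\[
\{f\}_S \lesssim \sum_{k \geq 0} 2^{-2k}\, \langle f \rangle_{2^{k+1}S, 1},
\]
and similarly for $\{g\}_S$. Substituting both expansions into the left side of~\eqref{e:new<} gives
\[
\sum_{S \in \mathcal{S}_0} |S|\,\{f\}_S\{g\}_S \lesssim \sum_{k,j \geq 0} 2^{-2k-2j} \sum_{S \in \mathcal{S}_0} |S|\, \langle f \rangle_{2^{k+1}S, 1}\, \langle g \rangle_{2^{j+1}S, 1}.
\]

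Second, by symmetry it suffices to handle $k \geq j$. Since $2^{j+1}S \subset 2^{k+1}S$, I have $\langle g \rangle_{2^{j+1}S, 1} \leq 2^{k-j}\, \langle g \rangle_{2^{k+1}S, 1}$, placing both averages on the common scale $2^{k+1}S$. The key observation is that the enlarged collection $\{2^{k+1}S : S \in \mathcal{S}_0\}$ is $2^{-k-3}$-sparse, since the original pairwise disjoint witnesses $E_S \subset S$ satisfy $|E_S| > |S|/4 = 2^{-k-3}\,|2^{k+1}S|$. Writing $|S| = 2^{-k-1}|2^{k+1}S|$ and applying Lemma~\ref{l:eta} (together with Lemma~\ref{l:oneFormToRuleThem} to pick a single dominating form) bounds the inner sum by $2^{-k-1} \cdot 2^{k+3}\, \Lambda(f,g) \lesssim \Lambda(f,g)$ for a sparse form $\Lambda$ depending only on $f,g$.

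Combining all numerical factors, the contribution of each pair $(k,j)$ with $k \geq j$ is bounded by $2^{-2k-2j} \cdot 2^{k-j} \cdot \Lambda(f,g) = 2^{-k-3j}\, \Lambda(f,g)$, which sums geometrically to $O(1)\,\Lambda(f,g)$. I expect the only mild obstacle to be the scale mismatch between $2^{k+1}S$ and $2^{j+1}S$ in the two averages: the crude comparison loses a factor $2^{k-j}$, but this is readily absorbed by the cubic decay $2^{-2k-2j}$ coming from the weight in~\eqref{e:new}. The rest is essentially bookkeeping, with Lemma~\ref{l:eta} doing the real work of trading the shrinking sparsity constant $2^{-k-3}$ for a uniform sparse bound.
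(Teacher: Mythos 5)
Your proposal is correct and follows essentially the same route as the paper's proof: expand the long-tail average over dyadic enlargements $2^tS$, observe that the dilated collection has sparsity constant degrading like $2^{-t}$, and invoke Lemma~\ref{l:eta} to absorb that loss against the cubic decay of the weight. The only difference is that you spell out the $(k,j)$ off-diagonal bookkeeping that the paper compresses into the single phrase ``appealing to the power decay in \eqref{e:new}.''
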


\begin{proof}
For integers $ t >0$ let $ \mathcal S _{t} = \{2 ^{t} S  :  S\in \mathcal S\}$. Assuming that $ \mathcal S_0$ is $\tfrac 12 $-sparse, it follows that  $ \mathcal S_t$ is  $ 2 ^{-t-1}$-sparse, for $ t>0$. Appealing to the power decay in \eqref{e:new}
\begin{equation*}
\sum_{S\in \mathcal S_0} \lvert  S\rvert \{f\}_S \{g\}_S  \lesssim 
\sum_{t=0} ^{\infty } 2 ^{-2t}
\Lambda _{\mathcal S_t} (f,g). 
\end{equation*}
But by Lemma~\ref{l:eta}, there is a fixed $ \tfrac 12 $-sparse form $ \Lambda (f,g)$ so that 
\begin{equation*}
\Lambda _{\mathcal S_t} (f,g) \lesssim 2 ^{t} \Lambda (f,g), \qquad t >0.  
\end{equation*}
So the proof is complete. 
\end{proof}

\bigskip 
Throughout, $e(x):=e^{2\pi i x}$, and $ \varepsilon >0$ is a fixed small absolute constant.  
 For a function $f\in \ell^2(\mathbb{Z})$, the  (inverse) Fourier transform of $f$ is defined as
\begin{align*}
\mathcal{F}f(\beta)&:=\sum_{n\in \mathbb{Z}}f(n)e(-\beta n),
\\
\mathcal{F}^{-1}g(n)&=\int_\mathbb{T}g(\beta)e(\beta n)d\beta.
\end{align*}
 
 We will define operators as Fourier multipliers.  Namely, given a function $ M  : \mathbb T \mapsto \mathbb C $, we define the associated linear operator by 
 \begin{equation} \label{e:FM}
\mathcal F [\Phi _{M} f] (\beta ) = M (\beta ) \mathcal Ff (\beta ).  
\end{equation}
The notation $ \mathcal F ^{-1} M = \check M$ will be convenient.  
As above, for kernel $ K$, the operator $ T_K$ will denote convolution with respect to $ K$. Thus, 
$ \Phi _{M} = T _{\check M}$.

 \section{The Main Decomposition} \label{s:decompose}
 
 We prove the main result by decomposition of the Fourier multiplier 
 \begin{equation} \label{e:M}
M (\beta ) := \sum_{m\neq 0} \frac {e (-\beta  m ^{3})} m 
\end{equation}
In this section, we detail the decomposition, which is done in the standard way, with one new point needed.

  \textbf{The kernel.} Let $\left\{\psi_j \right\}_{j \geq 0}$ be a dyadic resolution of $\frac{1}{t}$, where $\psi_j(x) = 2^{-j} \psi(2^{-j}x)$ is a smooth odd function satisfying $|\psi(x)| \leq 1_{[1/4, 1]}(|x|)$.  In particular
 \begin{equation} \label{e:psi}
\sum_{k\geq 0}\psi_k(t) = \frac 1 t , \qquad \lvert  t\rvert\geq 1.  
\end{equation}
 
\textbf{The Major Arcs.} 
The rationals in the torus are the union over $s \in \mathbb{N}$ of the collections $\mathcal{R}_s$ given by
\begin{equation}\label{e:R}
 \mathcal{R}_s := \left\{ B/Q \in \mathbb{T}: (B, Q)=1, 2^{s-1} \leq Q < 2^s \right\}. 
\end{equation} 
 Namely the denominator of the rationals is held approximately fixed.  
 For all rationals $B/Q\in \mathcal{R}_s$, define the $j$-th major box at $ B/Q$,  to be the
 \[\mathfrak{M}_j(B/Q):=\{\beta\in \mathbb{T},  |\beta-B/Q|\leq 2^{(\varepsilon -3)j}\}, \qquad s\leq \varepsilon j. \] 
 Collect the major arcs, denoting 
\begin{eqnarray}  \label{e:major}
\mathfrak{M}_j := \bigcup_{(B, Q)=1: Q \leq 2^{6j \epsilon}} \mathfrak{M}_j(B/Q). 
\end{eqnarray}
Note in particular  that for a sufficiently small $\varepsilon$,  in the union above no two distinct major arcs $ \mathfrak{M}_j(B/Q)$ intersect.  
That is,  if $B_1/Q_1\neq B_2/Q_2$, suppose that $\beta\in \mathfrak{M}_j(B_1/Q_1)\cup \mathfrak{M}_j(B_2/Q_2)$. Then 
 \[2^{-6j\varepsilon}\leq|B_1/Q_1-B_2/Q_2|\leq |B_1/Q_1-\beta|+|B_2/Q_2-\beta|\leq 2^{(\varepsilon -3)j+1},\]
which is a contradiction for $\varepsilon<2/7$.

\textbf{Multipliers.}
We use the notation below for the decomposition of the multiplier.  
\begin{align} \label{e:Mj}
M_j(\beta) & := \sum_{m \in \mathbb{Z}} e(-\beta m^3) \psi_j(m),
\\ \label{e:Hj}
H_j(y) & := \int_\mathbb{R} e(-yt^3) \psi_j(t) dt,  \quad  \qquad  \textup{(Continuous analog of $ M_j$)}
   \\
S(B/Q) &:= \frac{1}{Q} \sum_{r = 0}^{Q-1} e(-B/Q \cdot r^3), \quad  \qquad  \textup{(Gauss sum)}
\\  \label{e:Ljs}
L_{j,s}(\beta)& :=\sum_{B/Q\in \mathcal{R}_s} S(B/Q)H_j(\beta-B/Q)\chi_s(\beta-B/Q), 
\\
\noalign{\noindent where $\chi$ is  a smooth even bump function with
$\mathbbm{1}_{[-1/10,1/10]}\leq \chi\leq\mathbbm{1}_{[-1/5,1/5]}$ and  $\chi_s(t)=\chi(10^st)$, }  
\\  \label{e:Lj}
 L_j(\beta) & := \sum_{s \leq j \epsilon} L_{j,s}(\beta), \qquad j \geq 1, 
\\\label{e:Ls}
L^s(\beta)&:=\sum_{ j  \geq s/ \varepsilon }L_{j,s}(\beta), \qquad s\geq 1, \\ 
L (\beta ) &: =  \sum_{s=1}^\infty L^{s}(\beta) = \sum_{j=1}^\infty L_j(\beta),
\\ \label{e:Ej}
E_j(\beta)&:= M_j(\beta) - L_j(\beta), \qquad j \geq 1 \\ 
 \label{e:E} 
 E(\beta) &:= \sum_{j=1}^\infty E_j(\beta).
   \end{align}
Therefore, by construction, $M(\beta) = L(\beta) + E(\beta)$ for all $\beta \in \mathbb{T}$. Our motivation for introducing the above decomposition is that the discrete multiplier $M_j$ is well-approximated by its continuous analogue $L_j$ on the major arcs in $\mathfrak{M}_j$.  And off of the major arcs, the multiplier is otherwise small.  

Theorem \ref{t:main} is proved by showing that there exists $ 1< r < 2$ and $\kappa >0$ such that
\begin{align} \label{e:Ej-est}
\lVert \Phi _{E_j}  : \textup{Sparse} (r,r)\rVert \lesssim 2 ^{- \kappa  j}, \qquad j \geq 1 \\ 
\label{e:Ls-est}
\lVert \Phi _{L^s}  : \textup{Sparse} (r,r)\rVert \lesssim 2 ^{- \kappa  s}, \qquad  s \geq 1. 
\end{align}
Indeed, from the above inequalities, it follows that 
\begin{align}
\lVert \Phi_L  :  \textup{Sparse} (r,r)\rVert 
\leq \sum_{s=1} ^{\infty } 
\lVert \Phi_{L^s}  :  \textup{Sparse} (r,r)\rVert \lesssim 
 \sum_{s=1} ^{\infty } 2 ^{- \kappa s  } \lesssim 1,
\\\label{e:ES}
\lVert \Phi_E  :  \textup{Sparse} (r,r)\rVert 
\leq \sum_{j=1} ^{\infty } 
\lVert \Phi_{E_j}  :  \textup{Sparse} (r,r)\rVert \lesssim 
 \sum_{j=1} ^{\infty } 2 ^{- \kappa j  } \lesssim 1. 
\end{align}
Therefore, our main theorem follows from 
\begin{align}
\lVert \Phi_M  :  \textup{Sparse} (r,r)\rVert & \leq 
\lVert \Phi _L  :  \textup{Sparse} (r,r)\rVert + \lVert \Phi_E  :  \textup{Sparse} (r,r)\rVert \lesssim 1. 
\end{align}
We  prove the `minor arcs' estimate \eqref{e:Ej-est} in \S{4} and the `major arcs' estimate \eqref{e:Ls-est} in \S{5}.

\bigskip
The next theorem gives quantitative estimates for the Gauss sums \eqref{e:Gauss} and the multipliers $E_j$ defined in \eqref{e:Ej} that are essential to our proof of Theorem \ref{t:main}. 

 \begin{theorem}\label{t:error}
For absolute choices of $\varepsilon >0 $, 
\begin{gather}  \label{e:Gauss}
 \lvert  S(B/Q) \rvert  \lesssim 2 ^{- \varepsilon s}, \qquad B/Q\in \mathcal R_s, \qquad s\geq 1,  
 \\   \label{e:Ej<}
\lVert E_j(\beta)\rVert _{\infty }\lesssim 2^{-\varepsilon j} , \qquad j\geq 1 , 
 \\    \label{e:D}
\Bigl\lVert \frac{d^2}{d\beta^2} E_j(\beta)  \Bigr\rVert _{\infty } \lesssim 2^{7j}, \qquad j\geq 1.  
 \end{gather}
 \end{theorem}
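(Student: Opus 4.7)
My plan is to handle the three estimates in Theorem~\ref{t:error} with methods drawn from the Hardy--Littlewood circle method. The Gauss sum bound \eqref{e:Gauss} is classical: because $(B,Q)=1$ with $Q \asymp 2^s$, a standard analysis of the complete cubic Gauss sum $S(B/Q)$ (using the Chinese remainder theorem to reduce to prime-power modulus, then Weyl/Hua estimates on each factor) yields $|S(B/Q)| \lesssim Q^{-1/3 + o(1)}$. Choosing $\varepsilon < 1/3$ gives \eqref{e:Gauss} with room to spare.

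The second-derivative bound \eqref{e:D} is by direct differentiation. For $M_j$, termwise differentiation gives $\frac{d^2}{d\beta^2} M_j(\beta) = -4\pi^2 \sum_m m^6 e(-\beta m^3)\psi_j(m)$, and since $\psi_j$ is supported on $|m| \asymp 2^j$ with $|\psi_j| \lesssim 2^{-j}$, this is $\lesssim 2^{6j}$. For $L_j$ one differentiates $H_j(\beta-B/Q)\chi_s(\beta-B/Q)$; each derivative on $H_j$ produces a factor $\lesssim 2^{3j}$ via the cubic phase, while derivatives on $\chi_s$ contribute at most $10^{2s} \leq 2^{2j\varepsilon}$. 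Summing over $B/Q \in \mathcal R_s$ (cardinality $\lesssim 2^{2s}$) and $s \leq j\varepsilon$, while using $|S(B/Q)| \lesssim 2^{-\varepsilon s}$ for convergence, produces the same $2^{6j}$ main term times a harmless $2^{O(j\varepsilon)}$ factor. This fits comfortably inside $\lesssim 2^{7j}$.

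The main work is \eqref{e:Ej<}. I would fix $\beta \in \mathbb{T}$ and apply Dirichlet's theorem to find $(B,Q)=1$ with $Q \leq 2^{(3-\varepsilon)j}$ and $|\beta - B/Q| \leq Q^{-1} 2^{-(3-\varepsilon)j}$. If $Q \leq 2^{6j\varepsilon}$, then $\beta \in \mathfrak{M}_j(B/Q)$. Here I would approximate $M_j(\beta)$ by $S(B/Q) H_j(\beta-B/Q)$ via Poisson summation: splitting $m = Qq + r$ with $0 \leq r < Q$, the $r$-sum produces the Gauss sum $S(B/Q)$ and the $q$-sum is a Riemann sum converging to $H_j(\beta-B/Q)$, with error controlled by the smoothness of $\psi_j$ and by $Q \leq 2^{6j\varepsilon}$. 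For this same $\beta$, the cutoff $\chi_s$ is identically $1$ at $B/Q$, and contributions to $L_j(\beta)$ from other rationals $B'/Q' \in \mathcal{R}_{s'}$ with $s' \leq j\varepsilon$ are controlled by the separation of distinct major arcs proved in the text together with van der Corput decay of $H_j(\beta-B'/Q')$ away from zero.

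If instead $Q > 2^{6j\varepsilon}$, then $\beta$ lies in no major arc $\mathfrak{M}_j(B/Q)$ in $L_j$, and I would bound $M_j(\beta)$ directly by Weyl's inequality for cubic exponential sums, obtaining $|M_j(\beta)| \lesssim 2^{-c\varepsilon j}$; meanwhile every nonzero term in $L_j(\beta)$ has either $\chi_s(\beta - B'/Q') = 0$, or $|\beta - B'/Q'|$ within the support of $\chi_s$, in which case $|S(B'/Q')| \lesssim 2^{-\varepsilon s}$ and $|H_j(\beta - B'/Q')| \lesssim 1$, and summation over $\mathcal R_s$ and $s \leq j\varepsilon$ closes the estimate. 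The main obstacle is the major-arc Poisson comparison: producing the genuine geometric saving $2^{-\varepsilon j}$ in $M_j(\beta) - S(B/Q) H_j(\beta - B/Q)$ with explicit dependence on $Q$ demands careful termwise estimates against $\psi_j$. The technique is standard in the discrete harmonic analysis literature (cf.\ Bourgain \cite{MR937582} and Ionescu--Wainger \cite{IW}), but the bookkeeping has to be sharp enough to yield a single uniform exponent $\varepsilon$ serving all three conclusions simultaneously.
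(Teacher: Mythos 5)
Your proposal is correct and follows essentially the same route as the paper: Gauss sum estimate from the Weyl/Hua theory, the $E_j$ bound by approximating $M_j$ on major arcs via the $m = Qq+r$ decomposition and Riemann-sum comparison with $H_j$ (the paper's Lemma~\ref{approx}) while invoking Weyl's inequality off the major arcs (the paper's Lemma~\ref{minorarcs}), and the derivative bound by termwise differentiation. The only cosmetic difference is that you frame the major/minor dichotomy through an explicit Dirichlet approximant of $\beta$, whereas the paper phrases it directly as $\beta \in \mathfrak{M}_j$ versus $\beta \notin \mathfrak{M}_j$; these are the same case split.
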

 
The first two are well-known estimates.  
The estimate \eqref{e:Gauss} is the Gauss sum bound, see \cite{Hua}, while the estimate \eqref{e:Ej<} is gotten by combining Lemma \ref{minorarcs} and Lemma \ref{approx}. The only unfamiliar estimate is the derivative bound \eqref{e:D}, but our claim is very weak and follows from elementary considerations.

  \bigskip 
  
The details of a proof of the Theorem \ref{t:error} are represented in the literature \cites{MR1056560,2015arXiv151206918K}.  We indicate the details.  A central lemma is this approximation of $M_j$ defined in \eqref{e:Mj}, in terms of $L_{j}$ defined in \eqref{e:Lj}.

  \begin{lemma}\label{approx}
  For $1 \leq s \leq \epsilon j, B/Q \in \mathcal{R}_s$, we have the approximation 
  \begin{eqnarray*}
  M_j(\beta) = L_{j} (\beta) + O (2^{(2 \epsilon-1)j}), \qquad  \beta \in \mathfrak{M}_j(B/Q). 
  \end{eqnarray*}
  \end{lemma}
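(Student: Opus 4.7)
The plan is to reduce $M_j$ on the arc $\mathfrak{M}_j(B/Q)$ to a Riemann sum and compare it to $L_j$, which simplifies drastically on this arc.

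\textbf{Step 1: Identify $L_j$ on $\mathfrak{M}_j(B/Q)$.} I would first argue that on $\mathfrak{M}_j(B/Q)$ one has $L_j(\beta) = S(B/Q)H_j(\beta-B/Q)$. Since the major arcs $\mathfrak{M}_j(B'/Q')$ with $Q'\leq 2^{6j\varepsilon}$ are pairwise disjoint, $\beta$ is at distance at least $2^{-6j\varepsilon}-2^{(\varepsilon-3)j}\gtrsim 2^{-6j\varepsilon}$ from any other rational with denominator $\leq 2^{\varepsilon j}$. For $\varepsilon$ small, this distance exceeds the support radius $\tfrac{1}{5}\cdot 10^{-s'}\geq \tfrac{1}{5}\cdot 10^{-\varepsilon j}$ of $\chi_{s'}$, so every term in $L_{j,s'}$ with $s'\leq \varepsilon j$ other than the one at $B/Q$ vanishes. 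At the term $B/Q$ itself, $\chi_s(\beta-B/Q)=1$ because $|\beta-B/Q|\leq 2^{(\varepsilon-3)j}\ll 10^{-s-1}$ for $\varepsilon$ small.

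\textbf{Step 2: Expand $M_j$ along residues mod $Q$.} Writing $\beta = B/Q+\xi$ with $|\xi|\leq 2^{(\varepsilon-3)j}$ and decomposing $m=kQ+r$ with $0\leq r<Q$ and $k\in\mathbb{Z}$, the identity $(kQ+r)^3\equiv r^3\pmod{Q}$ factors the exponential:
\begin{equation*}
M_j(\beta) = \sum_{r=0}^{Q-1} e(-Br^3/Q)\sum_{k\in\mathbb{Z}} g(kQ+r), \qquad g(t):=e(-\xi t^3)\psi_j(t).
\end{equation*}
The target is then to show that the inner sum is $\tfrac{1}{Q}\int_{\mathbb{R}} g(t)\,dt = \tfrac{1}{Q}H_j(\xi)$ up to acceptable error, for then summing over $r$ reconstructs exactly $S(B/Q)H_j(\xi)$.

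\textbf{Step 3: Riemann sum approximation.} A straightforward Riemann-sum comparison gives
\begin{equation*}
\Bigl|\sum_{k\in\mathbb{Z}} g(kQ+r) - \frac{1}{Q}\int_{\mathbb{R}} g(t)\,dt\Bigr| \lesssim |\operatorname{supp}\psi_j|\cdot \|g'\|_\infty.
\end{equation*}
Since $\psi_j$ is supported where $|t|\sim 2^j$ with $|\psi_j|\lesssim 2^{-j}$ and $|\psi_j'|\lesssim 2^{-2j}$, and since $|\xi t^2|\lesssim 2^{(\varepsilon-1)j}$ on the support, one gets $\|g'\|_\infty\lesssim 2^{(\varepsilon-2)j}$. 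Multiplying by the support length $\sim 2^j$ yields an error of $O(2^{(\varepsilon-1)j})$ per residue. Summing over $Q\leq 2^{\varepsilon j}$ residues gives the total error $O(Q\cdot 2^{(\varepsilon-1)j}) = O(2^{(2\varepsilon-1)j})$, matching the claim.

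\textbf{Main obstacle.} None of the steps are individually difficult, but the bookkeeping in Step 1, verifying quantitatively that $\chi_s(\beta-B/Q)=1$ and that all rival $\chi_{s'}$ terms vanish for the full range $s,s'\leq \varepsilon j$, is the place where the specific choice of $\varepsilon$ and the powers in $\mathfrak{M}_j$ must be compatible. The analytic derivative bound in Step 3 is the only calculation of substance, and it is elementary because the cubic phase $\xi t^3$ is tame on the major arc: $|\xi| \cdot 2^{2j} \leq 2^{(\varepsilon-1)j}\ll 1$.
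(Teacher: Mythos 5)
Your Steps 2 and 3 essentially reproduce the paper's proof of the estimate $\lvert M_j(\beta) - S(B/Q)H_j(\beta-B/Q)\rvert \lesssim 2^{(2\varepsilon-1)j}$: you split $m$ into residues mod $Q$, use $(kQ+r)^3 \equiv r^3 \pmod Q$, and do a Riemann-sum comparison. (The paper first Taylor-expands the oscillatory phase, while you keep $g(kQ+r)$ intact and compare it to its average over $[kQ+r, (k+1)Q+r]$; both give the same bound.) The derivative bound $\lVert g'\rVert_\infty \lesssim 2^{(\varepsilon-2)j}$ and the final count over $Q \leq 2^{\varepsilon j}$ residues are correct.

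Step 1, however, contains a real gap. You claim $L_j(\beta) = S(B/Q)H_j(\beta-B/Q)$ \emph{exactly} on $\mathfrak{M}_j(B/Q)$, arguing that the separation between rationals ($\gtrsim 2^{-6\varepsilon j}$ or $\gtrsim 2^{-2\varepsilon j}$ for the $L_j$ sum range) exceeds the support radius $\tfrac15 10^{-s'}$ of $\chi_{s'}$. This inequality goes the wrong way. The separation is a very small number decaying in $j$, whereas for the \emph{smallest} levels $s'$ the support radius is essentially a fixed constant: for $s'=1$ the bump $\chi_1(\cdot)$ has support radius $1/50$, which dwarfs $2^{-2\varepsilon j}$ once $j \geq 1/\varepsilon$. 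Concretely, take $B/Q = 1/100 \in \mathcal{R}_7$; then $\beta \in \mathfrak{M}_j(1/100)$ lies well inside the support of $\chi_1(\cdot - 0)$, so the $s'=1$ term $S(0/1)H_j(\beta)\chi_1(\beta)$ in $L_j$ is \emph{not} zero. What makes that term acceptable is not that $\chi_{s'}$ vanishes, but that $H_j$ is small there: the van der Corput decay $\lvert H_j(y)\rvert \lesssim 2^{-j}\lvert y\rvert^{-1/3}$, applied at $y = \beta - B'/Q'$ with $\lvert y\rvert \gtrsim 2^{-2\varepsilon j}$, gives a bound $\lesssim 2^{-j + 2\varepsilon j/3}$ per rival term, which (after summing over the $\lesssim 2^{2\varepsilon j}$ nearby rationals) is comfortably $O(2^{(2\varepsilon-1)j})$. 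The paper handles precisely this issue in its estimate \eqref{e:ML2}, invoking the van der Corput bound on $H_j$; you will need to add that ingredient. Your observation that $\chi_s(\beta - B/Q) = 1$ on the arc for the \emph{correct} $B/Q$ is fine, and disjointness does dispose of the other rationals at the \emph{same} level $s$ (since $\lvert B/Q - B'/Q'\rvert \geq 2^{-2s} > \tfrac15 10^{-s}$), but the cross-level terms $s' \neq s$ require the decay of $H_j$, not disjointness of supports.
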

  
 \begin{proof}
 We closely follow the argument in \cite{2015arXiv151206918K}. 
 There are two estimates to prove. 
 \begin{align}  \label{e:ML1}
 \lvert  M_j(\beta) - S(B/Q)H_j(\beta-B/Q) \rvert \lesssim 2^{(2 \epsilon-1)j}, 
 \\ \label{e:ML2}
 \lvert L _{j} (\beta) - S(B/Q)H_j(\beta-B/Q)  
 \rvert \lesssim 2^{(2 \epsilon-1)j},
 \end{align}
 both estimates holding uniformly over $\beta \in \mathfrak{M}_j(B/Q)$, and $B/Q\in \mathcal R_s$.  
 
For the second estimate \eqref{e:ML2}, it follows from the definitions of $L_j$ and $L _{j,s}$ in \eqref{e:Ljs}, as well as the disjointess of the major arcs that 
\begin{align*}
\lvert  L _{j} (\beta) &-S(B/Q)H_j(\beta-B/Q)  
 \rvert 
 \\&= 
 \lvert   L _{j,s} (\beta) -  S(B/Q)H_j(\beta-B/Q)
 \rvert 
 \\
 &\leq  
 \lvert  S(B/Q)H_j(\beta-B/Q)\rvert 
 (\mathbf{1}_{\mathfrak{M}_j (B/Q)} - \chi (10^s (\beta - B/Q)))
 \\
 & \lesssim \sup _{ \lvert \beta \rvert > \frac 12 10^{s-1}}  \lvert H_j (\beta) \rvert  \lesssim 10^{-s}. 
\end{align*}
The last bound is a standard van der Corput estimate.  
 
\medskip 

We turn to \eqref{e:ML1}.  
 Write $\beta = B/Q + \eta$, where $|\eta| \leq 2^{(\epsilon-3)j}$. For all positive $m$ in the support of $\psi_j$, decompose these integers into their residue classes $\mod Q$, i.e. $m = pQ +r$,t where $0 \leq r < Q \leq 2^{j \epsilon}$, and the $p$ values are integers in $[c,d]$, with $c= d/8 \simeq 2^j /Q$ to cover the support of $\psi_j$. 
 The argument of the exponential in \eqref{e:M} is, modulo 1, given by
 \begin{eqnarray*}
 \beta (pQ +r)^3  = (B/Q + \eta )(pQ+r)^3 \equiv r^3 B/Q +(pQ)^3 \eta + O ( 2^{j (2\epsilon-1)})
 \end{eqnarray*}
 Then the sum over all positive integers  $m$ in the support of $\psi_j$ can be written as \begin{align*}
 &\sum_{p \in [c,d]} \sum_{r=0}^{Q-1} \left[ e(-r^3 B/Q -(pQ)^3 \eta) +O(2^{(2\epsilon-1)j}) \right]  \psi_j(pQ+r) \\ &= 
 \sum_{r =0}^{Q-1} e(-r^3 \cdot B/Q) \times \sum_{ p \in [c,d]} e(- \eta (pQ)^3) \psi_j(pQ) + O(2^{(2\epsilon-1)j}) \\ &= S(B,Q) \times Q  \sum_{ p \in [c,d]} e( -\eta (pQ)^3) \psi_j(pQ) + O(2^{(2\epsilon-1)j}) .
 \end{align*}
 For fixed $ p \in [c,d]$ and $0 \leq t \leq Q$, we have 
 \begin{align*}
 |e(-\eta(pQ)^3 ) &\psi_j(pQ) - e(- \eta (pQ+t)^3) \psi_j(pQ+t)| \\&\lesssim  |e(-\eta (pQ)^3) - e(-\eta(pQ+t)^3)| 2^{-j} + | \psi_j(pQ) - \psi_j(pQ+t)| \lesssim 2^{(2\epsilon-2)j}. 
 \end{align*}
 Therefore, 
 \begin{eqnarray*}
 Q  \sum_{ p \in [c,d]} e( -\eta (pQ)^3) \psi_j(pQ) = \int_0^\infty e(-\eta t^3) \psi_j(t) dt + O(2^{(2\epsilon-1)j}).
 \end{eqnarray*}
 The analogous computation for negative values of $m$ yields 
  \begin{eqnarray*}
 \sum_{ m <0} e(-\beta m^3) \psi_j(m) = S(B,Q) \times \int_{-\infty}^0 e(-\eta t^3) \psi_j(t) dt + O(2^{(2\epsilon-1)j}),
 \end{eqnarray*}
 and combining the two estimates with the notation in \eqref{e:E} leads to the desired conclusion. 
 \end{proof}
 
We also need control of $M_j$ and $L_j$, defined in \eqref{e:Lj} on the  minor arcs, which are the open components of the complement of $\mathfrak{M}_j$ defined in \eqref{e:major}.  
 
 \begin{lemma}\label{minorarcs}
 
 There is a $\delta = \delta(\epsilon)$ so that uniformly in $j \geq 1$, 
 \begin{eqnarray*}
 |M_j(\beta)| +|L_j(\beta)| \lesssim 2^{-\delta j}, ~~~~\beta \not \in \mathfrak{M}_j. 
 \end{eqnarray*}
 \end{lemma}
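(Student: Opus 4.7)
The plan is to handle $M_j$ and $L_j$ by different methods. For $M_j$, I will invoke Weyl's inequality for cubic exponential sums after using Dirichlet's principle to extract a rational approximation $B/Q$ to $\beta$, with the hypothesis $\beta\notin\mathfrak{M}_j$ providing exactly the lower bound on $Q$ needed for the Weyl estimate to be nontrivial. For $L_j$, I will use the explicit formula \eqref{e:Ljs}, combining the Gauss sum bound \eqref{e:Gauss} with a stationary-phase decay estimate for $H_j$ away from the major arcs.

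For $M_j$, I will first apply Dirichlet's theorem at scale $2^{(3-\varepsilon)j}$: this produces $B/Q$ with $(B,Q)=1$, $Q\le 2^{(3-\varepsilon)j}$, and $|\beta-B/Q|\le (Q\cdot 2^{(3-\varepsilon)j})^{-1}$. If $Q\le 2^{6\varepsilon j}$, then $|\beta-B/Q|\le 2^{(\varepsilon-3)j}$, forcing $\beta\in\mathfrak{M}_j(B/Q)\subset\mathfrak{M}_j$ and contradicting the hypothesis. Thus $Q>2^{6\varepsilon j}$. Weyl's inequality for cubic sums then yields, for any $\sigma>0$,
\[
\Bigl|\sum_{m=1}^{N}e(\beta m^3)\Bigr|\lesssim_\sigma N^{1+\sigma}\bigl(Q^{-1}+N^{-1}+QN^{-3}\bigr)^{1/4},
\]
with $N=2^j$. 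A routine summation-by-parts against the smooth weight $\psi_j$ (size $2^{-j}$ on a scale-$2^j$ interval) transfers this to a bound on $M_j(\beta)$. Each of $Q^{-1}, N^{-1}, QN^{-3}$ is $\le 2^{-\varepsilon j}$ in the relevant range, giving $|M_j(\beta)|\lesssim 2^{(\sigma-\varepsilon/4)j}$. Choosing $\sigma<\varepsilon/4$ finishes this piece.

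For $L_j$, I will fix $\beta\notin\mathfrak{M}_j$ and observe that the cutoffs $\chi_s(\cdot-B/Q)$ for distinct $B/Q\in\mathcal{R}_s$ have pairwise disjoint supports, since distinct elements of $\mathcal{R}_s$ differ by $\gtrsim 2^{-2s}$ while the cutoffs have radius $\sim 10^{-s}$. Hence at most one term in $L_{j,s}(\beta)$ is nonzero. For such a term, since $Q\le 2^{\varepsilon j}\le 2^{6\varepsilon j}$, the hypothesis $\beta\notin\mathfrak{M}_j$ forces $|\beta-B/Q|>2^{(\varepsilon-3)j}$. Rescaling $t=2^j u$ in \eqref{e:Hj} gives $H_j(y)=\int e(-y 2^{3j}u^3)\psi(u)\,du$, and since the phase has derivative of size $\gtrsim |y|2^{3j}$ on $\operatorname{supp}\psi$, integration by parts yields $|H_j(y)|\lesssim (|y|2^{3j})^{-1}$ whenever $|y|2^{3j}\ge 1$. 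For $y=\beta-B/Q$ this gives $|H_j(\beta-B/Q)|\lesssim 2^{-\varepsilon j}$. Combining with \eqref{e:Gauss} and summing a geometric series in $s$:
\[
|L_j(\beta)|\lesssim \sum_{s\ge 1}2^{-\varepsilon s}\cdot 2^{-\varepsilon j}\lesssim 2^{-\varepsilon j}.
\]

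The main obstacle is parameter bookkeeping: the exponents in the definitions of $\mathfrak{M}_j$ and $\mathcal{R}_s$ must be aligned with the Dirichlet scale so that (i) the contradiction step in the $M_j$ argument produces $Q>2^{6\varepsilon j}$, and (ii) the three terms in Weyl's inequality are simultaneously under control. Both Weyl's inequality and the van der Corput decay are standard ingredients, so no new idea is required; the resulting $\delta=\delta(\varepsilon)$ can be taken to be any positive number below $\varepsilon/4$.
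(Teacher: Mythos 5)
Your proof is correct and takes essentially the same route as the paper, which simply cites Bourgain \cite{MR1019960}*{Lemma 5.4} (itself Dirichlet plus Weyl's inequality) for $M_j$ and the van der Corput decay of $H_j$ together with the Gauss-sum bound for $L_j$; you have just supplied the standard details that the paper delegates. The only cosmetic difference is that you derive the sharper non-stationary-phase estimate $|H_j(y)|\lesssim(|y|2^{3j})^{-1}$ (valid since $\psi$ is supported away from the critical point $u=0$), whereas the paper quotes the weaker $|H_j(y)|\lesssim 2^{-j}|y|^{-1/3}$; either suffices.
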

 
This estimate is essentially present in \cite{2015arXiv151206918K}. The bound $| M_j(\beta)| \lesssim 2^{-\delta j}$ for $\beta \not \in \mathfrak{M}_j$  can be seen from  Bourgain \cite{MR1019960}*{Lemma 5.4}, 
and is a consequence of a fundamental estimate of Weyl \cite{Iwaniec_Kowalski}*{Theorem 8.1}.  
The corresponding bound on $L_j$ is an easy consequence of the Van der Corput estimate  $|H_j(y)| \lesssim  2^{-j} |y|^{-1/3}$.

 \section{Minor Arcs} \label{s:minor}
Recalling the sparse form notation \eqref{e:SPN} and the Fourier multiplier notation \eqref{e:FM}, we now proceed to the proof of the bound in \eqref{e:Ej-est}. 

\begin{lemma}\label{l:minor} There exists $\kappa >0$  and $ 1< r <2 $ such that
\begin{equation}\label{e:ESparse}
\lVert \Phi _{E_j}  : \textup{Sparse} (r,r)\rVert \lesssim 2 ^{- \kappa  j}, \qquad j\geq 1.  
\end{equation}
\end{lemma}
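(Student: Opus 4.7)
The plan is to work at the level of the kernel $K_j := \check E_j$, combining the two bounds in Theorem \ref{t:error} through a spatial cutoff and interpolation.

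First I would record two kernel-level estimates. From \eqref{e:Ej<}, $\|K_j\|_{\ell^\infty} \leq \|E_j\|_{\infty} \lesssim 2^{-\varepsilon j}$. Two integrations by parts against $e(\beta n)\,d\beta$, combined with the derivative bound \eqref{e:D}, give
\begin{equation*}
|K_j(n)| \lesssim 2^{7j}/n^{2}, \qquad n\neq 0.
\end{equation*}
Fix the cutoff $N := 2^{(7+\varepsilon)j}$, and split $K_j = K_j^{\mathrm{main}} + K_j^{\mathrm{tail}}$ with $K_j^{\mathrm{main}} := K_j\mathbf 1_{[-N,N]}$. The tail has $\ell^1$ norm $\lesssim 2^{7j}/N = 2^{-\varepsilon j}$, and hence by Young's inequality and Plancherel
\begin{equation*}
\|T_{K_j^{\mathrm{main}}} : \ell^2\to\ell^2\| \leq \|E_j\|_\infty + \|K_j^{\mathrm{tail}}\|_{\ell^1} \lesssim 2^{-\varepsilon j}.
\end{equation*}

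For the main piece, I use that $\|T_{K_j^{\mathrm{main}}} : \ell^1\to\ell^\infty\| \leq \|K_j\|_\infty \lesssim 2^{-\varepsilon j}$. Riesz--Thorin then gives $\|T_{K_j^{\mathrm{main}}} : \ell^{r}\to\ell^{r'}\| \lesssim 2^{-\varepsilon j}$ for every $1<r<2$, so \eqref{e:elem2} yields
\begin{equation*}
\|T_{K_j^{\mathrm{main}}} : \mathrm{Sparse}(r,r)\| \lesssim N^{2/r-1} \cdot 2^{-\varepsilon j} = 2^{j\,[(7+\varepsilon)(2/r-1)-\varepsilon]}.
\end{equation*}
Choose $r<2$ close enough to $2$ that $(7+\varepsilon)(2/r-1)<\varepsilon$; then this piece is dominated by $2^{-\kappa j}$ for some $\kappa>0$.

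For the tail, I would decompose $K_j\mathbf 1_{|n|>N}$ into dyadic annular pieces $A_k := \{2^{k-1}\leq|n|<2^k\}$ with $k>\log_2 N$. The Step 1 bounds give the annular piece $\ell^1\!\to\!\ell^\infty$ norm $\lesssim 2^{7j-2k}$ and $\ell^2\!\to\!\ell^2$ norm $\lesssim 2^{7j-k}$ (via Young). Interpolating and applying \eqref{e:elem2} at scale $N_k=2^k$, the $r$-dependence in the two factors cancels and one obtains $\|T_{K_j\mathbf 1_{A_k}} : \mathrm{Sparse}(r,r)\| \lesssim 2^{7j-k}$. Summing geometrically over $k>(7+\varepsilon)j$ gives $\lesssim 2^{-\varepsilon j}$, and the triangle inequality \eqref{e:quasi} assembles the main and annular contributions into the claimed bound $2^{-\kappa j}$.

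The main obstacle is calibrating the cutoff $N$ against the exponent $r$. Since the derivative bound \eqref{e:D} is crude (only $2^{7j}$), the cutoff $N$ must be taken large, which forces the blow-up $N^{2/r-1}$ in \eqref{e:elem2} to be absorbed by taking $r$ very close to $2$. That any $r<2$ works at all hinges on the genuine geometric decay $2^{-\varepsilon j}$ from \eqref{e:Ej<}; if that were merely logarithmic, the present strategy would fail.
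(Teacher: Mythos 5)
Your argument is correct and follows essentially the paper's strategy: derive the kernel bound $|\check E_j(m)| \lesssim \min\{2^{-\varepsilon j}, 2^{7j}/(1+m^2)\}$ from \eqref{e:Ej<} and \eqref{e:D}, truncate the kernel at a scale $N$ that is a large power of $2^{j}$, bound the truncated piece via \eqref{e:elem2} after interpolating the $\ell^1\to\ell^\infty$ and $\ell^2\to\ell^2$ operator norms, and take $r$ close enough to $2$ that $N^{2/r-1}$ is absorbed by $2^{-\varepsilon j}$. The one place you diverge is the tail: you dyadically decompose $K_j\mathbf 1_{|n|>N}$ into annuli and rerun the \eqref{e:elem2}/Riesz--Thorin machinery at each scale, observing the $r$-dependence cancels. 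That computation is correct, but it is heavier than what the paper does, and indeed heavier than what you need: you already computed $\|K_j^{\mathrm{tail}}\|_{\ell^1}\lesssim 2^{-\varepsilon j}$ while bounding the $\ell^2$ norm of the main piece, and the pointwise domination $|T_{K}f|\leq \|K\|_{\ell^1}\, M_{\mathrm{HL}}f$ together with Theorem~\ref{t:Max} converts that $\ell^1$ bound directly into $\|T_{K_j^{\mathrm{tail}}}:\textup{Sparse}(1,1)\|\lesssim 2^{-\varepsilon j}$, finishing the tail in one line. (The paper uses the cutoff $N\simeq 2^{10j}$ rather than $2^{(7+\varepsilon)j}$; this is immaterial, as both give geometric decay after forcing $r$ close to $2$.) Everything else — the two-integration-by-parts derivation of the kernel decay, the $s=1,2$ endpoints, the interpolation, the choice $0<2-r\ll\varepsilon$ — matches the paper's proof.
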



\begin{proof}
We only need the $ L ^{\infty }$ bound on $ E_j$ given in \eqref{e:Ej<}, and the derivative condition \eqref{e:D}.  
In particular, these two conditions imply 
\begin{equation}\label{e:Echeck}
\lvert  \mathcal F ^{-1}  E_j (m)\rvert  \lesssim \min \Bigl\{ 2 ^{- \varepsilon j},  \frac {2 ^{7j}} { 1+ m ^2 }  \Bigr\}. 
\end{equation}

Write $  \mathcal F ^{-1}  E_j  = \check E_ {j,0} + \check E_{j,1} $, where $  \check  E_ {j,0} (m) = 
[\mathcal F ^{-1}  E_ {j} (m) ]\mathbf 1_{[-2 ^{10j}, 2 ^{10j}] } (m)$.  
  It follows immediately from \eqref{e:Echeck} that 
 \begin{gather}
\lVert T _{\check E_{j,1}}  : \ell ^2 \mapsto \ell ^2  \rVert  \lesssim \lVert \check  E_ {j,1}\rVert_1 \lesssim 2 ^{-3j}, 
\end{gather}
(Recall that $T_K$ denotes convolution with respect to kernel $ K$.) 
But,  it follows that $ T_K f \lesssim M _{\textup{HL}}f $ where the latter is the maximal function.  
And so by Theorem~\ref{t:Max}, we have 
\begin{equation*}
\lVert T _{\check E_{j,1}}  :  \textup{Sparse} (1,1) \rVert \lesssim 2 ^{-3j}.  
\end{equation*}
 
It remains to provide a sparse bound for $ T_{\check E_{j,0}}$ (which is the interesting case).  
We are in a position to use \eqref{e:elem2}, with $ N \simeq 2 ^{10j}$.  We have for $ 1< r < 2$
\begin{equation} \label{e:t0r}
\lVert T _{\check E_{j,0}}  :  \textup{Sparse} (r,r) \rVert \lesssim  2 ^{10j (\frac 2r-1)} 
\lVert T_ {\check E_{j,0}}  :  \ell ^r \mapsto \ell ^{r'}\rVert . 
\end{equation}
Notice that $ \frac 2r-1$ can be made arbitrarily small.  We need to estimate the operator norm above.  
But, we have the two estimates 
\begin{align*}
\lVert T_ {\check E_{j,0}}  :  \ell ^s \mapsto \ell ^{s'}\rVert \lesssim 2 ^{- \varepsilon j} ,  \qquad s=1,2. 
\end{align*}
The case of $ s=1$ follows from \eqref{e:Echeck}, and the case of $ s=2$ from Plancherel and  \eqref{e:Ej<}.  
We therefore see that we have a uniformly small estimate on the norm of $ T_{\check{E}_{j,0}}$ from $ \ell ^{r} \mapsto \ell ^{r'}$, for $ 1< r < 2$.  For $ 0< 2-r \ll \varepsilon $, we have the desired bound in \eqref{e:t0r}.  

\end{proof}

 \section{Major Arcs}  \label{s:major}
 
 The following estimate is the core of the Main Theorem.  Recalling the definition of $ L ^{s}$ in \eqref{e:Ls}, 
 the notation for Fourier multipliers \eqref{e:FM} and the sparse norm notation \eqref{e:SPN}, we have this, which verifies the bound in \eqref{e:Ls-est}.
 
\begin{lemma}\label{l:major}
There exists $\kappa >0$ and $1<r<2$ such that 
  \begin{equation}\label{e:Ls<}
\lVert \Phi _{L^s}  : \textup{Sparse} (r,r)\rVert \lesssim 2 ^{- \kappa s}, \qquad s\geq 1.  
\end{equation}
\end{lemma}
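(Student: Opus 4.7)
My plan is to combine the Gauss-sum cancellation, which produces an $\ell^2$ bound for $\Phi_{L^s}$ decaying as $2^{-\varepsilon s}$, with the elementary sparse bound of Proposition~\ref{p:elementary} applied to a physical-space decomposition of the kernel of $\Phi_{L^s}$. Riesz--Thorin interpolation between the $\ell^2$ bound and a trivial $\ell^1\to\ell^\infty$ bound then upgrades the sparse exponent from $2$ to an $r<2$ sufficiently close to $2$.

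\textbf{$\ell^2$ estimate and kernel decomposition.} Distinct rationals in $\mathcal R_s$ are separated by $\geq 2^{-2s}\gg 10^{-s}$, so the bumps $\chi_s(\,\cdot-B/Q)$ appearing in $L^s$ have pairwise disjoint supports. Coupled with \eqref{e:Gauss} and the uniform van der Corput bound $|\sum_{j\geq s/\varepsilon}H_j(y)|\lesssim 1$, this yields $\|L^s\|_{L^\infty(\mathbb T)}\lesssim 2^{-\varepsilon s}$ and hence $\|\Phi_{L^s}:\ell^2\to\ell^2\|\lesssim 2^{-\varepsilon s}$. Now the kernel factors as
\[
\check L^s(x)=\sum_{B/Q\in\mathcal R_s}S(B/Q)\,e(B/Q\cdot x)\,\check\Psi_s(x),\qquad \Psi_s:=\chi_s\sum_{j\geq s/\varepsilon}H_j,
\]
where $\check\Psi_s$ is Schwartz-like at physical scale $10^s$. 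Fix a smooth partition of unity $\{\eta_k\}_{k\geq0}$ on $\mathbb Z$ with $\eta_k$ supported on $|x|\leq C\cdot 10^s 2^{k+1}$ and coming from a smooth scaling, and set $K_{s,k}:=\check L^s\cdot\eta_k$. Then $K_{s,k}$ is supported on an interval of length $N_{s,k}\asymp 10^s 2^k$; crude bookkeeping using \eqref{e:Gauss} and the rapid decay of $\check\Psi_s$ gives $\|K_{s,k}\|_\infty\lesssim 2^{(2-\varepsilon)s}\cdot 10^{-s}\cdot 2^{-Nk}$ for arbitrary $N$; and $\|T_{K_{s,k}}:\ell^2\to\ell^2\|\lesssim 2^{-\varepsilon s}$ uniformly in $k$, since $\widehat{K_{s,k}}=L^s*\widehat{\eta_k}$ on $\mathbb T$ with $\|\widehat{\eta_k}\|_{L^1(\mathbb T)}=O(1)$ by Poisson summation for smooth $\eta_k$.

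\textbf{Interpolation and summation.} Proposition~\ref{p:elementary} gives
\[
\|T_{K_{s,k}}:\textup{Sparse}(r,r)\|\lesssim N_{s,k}^{2/r-1}\,\|T_{K_{s,k}}:\ell^r\to\ell^{r'}\|.
\]
By Riesz--Thorin between the $\ell^2\to\ell^2$ bound above and the trivial $\ell^1\to\ell^\infty$ bound $\|T_{K_{s,k}}:\ell^1\to\ell^\infty\|\leq\|K_{s,k}\|_\infty$, at the parameter $\theta:=2(r-1)/r$, one finds that the powers of $10^s$ miraculously cancel, leaving
\[
\|T_{K_{s,k}}:\textup{Sparse}(r,r)\|\lesssim 2^{s[2(1-\theta)-\varepsilon]}\cdot 2^{-k(N-1)(1-\theta)}.
\]
Choosing $r$ with $2/(1+\varepsilon/2)<r<2$, equivalently $\theta>1-\varepsilon/2$, makes the $s$-exponent strictly negative, and choosing $N$ large makes the $k$-series converge. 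Summing via the quasi-triangle inequality \eqref{e:quasi} produces $\|\Phi_{L^s}:\textup{Sparse}(r,r)\|\lesssim 2^{-\kappa s}$ with $\kappa:=\varepsilon-2(1-\theta)>0$, which is the assertion of the lemma.

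\textbf{Main obstacle.} The crux is the large physical scale $10^s$ of the kernel, which inflates the $N^{2/r-1}$ loss in Proposition~\ref{p:elementary}. Only the exact cancellation of the $10^s$ factors between this loss and the Gauss-sum gain $2^{-\varepsilon s}$ transported through the $\ell^2$ estimate permits the scheme to close, and it does so only for $r$ strictly below but arbitrarily close to $2$. Subsidiary to this is the need to justify the uniform $\ell^2$ bound on each piece $T_{K_{s,k}}$, for which smooth (not sharp) physical-space truncation is essential; a Dirichlet-type truncation would introduce a $\log$ loss that is harmless here but worth noting.
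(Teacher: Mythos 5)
Your plan is genuinely different from the paper's: instead of invoking the vector-valued Calder\'on--Zygmund sparse bound, Theorem~\ref{t:sparse}, on the Hilbert-space-valued functions $f_s,g_s$, you attempt a purely scalar argument, slicing $\check L^s$ in physical space into pieces $K_{s,k}$ and feeding each piece into Proposition~\ref{p:elementary}. Unfortunately there is a genuine gap, and it is precisely at the point you flag as delicate: the claim that $\check\Psi_s$ is ``Schwartz-like at physical scale $10^s$'' is false, and without it the $k$-series diverges.

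The function $\Psi_s=\chi_s\cdot H^s$ is compactly supported at frequency scale $10^{-s}$ and bounded, but it is not smooth. As observed in Step~3 of the paper's argument, $H^s(\beta)=\sum_{j\geq s/\varepsilon}H_j(\beta)$ is the Fourier transform of a Calder\'on--Zygmund kernel, and therefore has a $\operatorname{sgn}$-type singularity at $\beta=0$. Since $\chi_s(0)=1$, the product $\Psi_s$ inherits that singularity, and $\check\Psi_s=\check\chi_s*\check H^s$ decays only like $1/|x|$: for large $|x|$ one has $\check\Psi_s(x)\approx\chi_s(0)\,\check H^s(x)\sim 1/x$, because convolving a CZ kernel against a bump at the much smaller scale $10^s\ll 2^{3s/\varepsilon}$ does not improve its polynomial tail. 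The correct sup bound on the piece at scale $N_{s,k}$ is therefore only $\lVert K_{s,k}\rVert_\infty\lesssim 2^{(2-\varepsilon)s}/N_{s,k}$, with no extra geometric gain in $k$. Plugging this into your interpolation with $\theta=2(r-1)/r$, the exponent on $N_{s,k}$ is $\bigl(\tfrac2r-1\bigr)-(1-\theta)$, which vanishes identically for every $r$: the resulting bound on $\lVert T_{K_{s,k}}:\textup{Sparse}(r,r)\rVert$ is constant in $k$, and the sum over $k$ diverges. This is not a fixable bookkeeping error --- it reflects the elementary fact that a CZ convolution truncated to a dyadic annulus $\lvert x\rvert\sim 2^m$ has $\ell^2\to\ell^2$ norm of order one uniformly in $m$, so no physical-space decomposition of $\check H^s$ can have summable pieces in $\ell^2$. (A secondary issue, worth noting but fixable, is that the relevant physical scale of $\check L^s$ is $2^{3s/\varepsilon}$, not $10^s$, since $\check H^s$ vanishes on $[-2^{3(s/\varepsilon-2)},2^{3(s/\varepsilon-2)}]$.)

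The paper sidesteps this obstruction by never decomposing $\check H^s$ at all: Theorem~\ref{t:sparse} is applied once, in its Hilbert-space-valued form, to the single CZ operator $T_{\check H^s}$ acting on $f_s,g_s\in\ell^2(\mathcal R_s)$; the Gauss sums supply the $2^{-\varepsilon s}$ decay; and the remaining work (Step~4) is the combinatorial passage from the sparse form in $(f_s,g_s)$ to one in $(f,g)$, at a cost of $2^{\varepsilon s/4}$ which the Gauss-sum gain absorbs. Any scalar alternative must, at some stage, exploit the cancellation in $\check H^s$ rather than the size of its truncations.
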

Combining the 'major arcs' estimate in Lemma \ref{l:major} with the 'minor arcs' estimate in Lemma~\ref{l:minor}, the proof of Theorem \ref{t:main} is complete. 

\bigskip 

The remainder of this section is taken up with the proof of the Lemma. 
The central facts are (1) the Gauss sum bound \eqref{e:Gauss};  (2) the sparse bound for Hilbert space valued  singular integrals Theorem~\ref{t:sparse}, which is applied to Fourier projections of $ f$ and $ g$ onto the major arcs; (3) an argument to pass from a sparse operator applied to the the aforementioned Fourier projections to a sparse bound in terms of just $ f$ and $ g$.  

\smallskip 

\textbf{Step 1}. We define our Hilbert space valued functions, where the Hilbert space will be the finite dimensional space $ \ell ^2 (\mathcal R_s)$.  
Recall that the rationals $ \mathcal R_s$ are defined in \eqref{e:R}, and  the functions $ \chi_s $ are defined in \eqref{e:Ljs}.  
Given $ f \in \ell ^2 $, set 
\begin{equation} \label{e:fs}
f _{s}  =  \{  f _{s, B/Q}  : B/Q\in \mathcal R_s\} :=   \{    \chi _{s-1} \ast (\textup{Mod} _{-B/Q} f)  :   B/Q\in \mathcal R_s\}.  
\end{equation}
Above, $ \textup{Mod} _{\lambda } f (x) = e (\lambda x) f (x)$ is modulation by $ \lambda $.  
The intervals 
\begin{equation} \label{e:disjoint}
 \{ [ B/Q - 10 ^{-s}, B/Q + 10^{-s} ]  : B/Q \in \mathcal R_s\}
\end{equation}
are pairwise disjoint, so that by Bessel's Theorem, we have 
\begin{equation}\label{e:fsf}
\lVert   f_s \rVert _ {\ell ^2 (\ell ^2 (\mathcal R_s) )} 
=
\lVert   \{f_ {s,B/Q}  : B/Q \in \mathcal R_s\} \rVert _ {\ell ^2 (\ell ^2  (\mathcal R_s) )}
\leq \lVert f\rVert _{2}.  
\end{equation}

\smallskip 

\textbf{Step 2.} 
The inner product we are interested in can be viewed as one acting on $ \ell ^2  (\mathcal R_s)$ functions.  
Observe that the Fourier multiplier associated to $ L ^{s}$ enjoys the equalities below. 
Beginning from \eqref{e:Ls} and \eqref{e:Ljs},  
\begin{align}
\langle \Phi _{L ^{s}} f ,g \rangle 
& = \sum_{B/Q\in \mathcal R_s} \sum_{j \geq s/ \varepsilon } 
S (B,Q) \cdot  \langle   H_j (\beta -B/Q)\chi _{s} ( \beta - B/Q  ) \mathcal F f (\beta ) , \mathcal F g (\beta )   \rangle 
\\
&=  \sum_{B/Q\in \mathcal R_s} \sum_{j \geq s/ \varepsilon } 
S (B,Q) \cdot  \langle   H_j (\beta)\chi _{s} ( \beta )   f   (\beta + B/Q) , \mathcal F g  (\beta+B/Q )   \rangle 
\\
&= \sum_{B/Q\in \mathcal R_s} \sum_{j \geq s/ \varepsilon } 
S (B,Q) \cdot  \langle   H_j (\beta)\chi _{s} ( \beta ) \mathcal F f _{s,B/Q} (\beta ) , \mathcal F g _{s, B/Q} (\beta )   \rangle  
\\
\noalign{\noindent  Crucially, above we have removed some modulation factors to get a fixed multiplier acting on a Hilbert space valued function. Continuing the equalities, we have } 
\\ \label{e:TG}
&=   \sum_{B/Q\in \mathcal R_s} S (B,Q)  \langle \Phi _{H^s}  f_{s, B/Q}, g_{s,B/Q}  \rangle,  \qquad \textup{where } \quad   H^s = \sum_{j\geq s/ \varepsilon } H _{j}  . 
\end{align}

We address the Gauss sums $ S (B,Q)$ above. Recalling \eqref{e:Gauss}, and denoting $ f'_s = \{ \lambda _{B/Q} f _{s,B/Q}\}$, for appropriate choice of $ \lvert  \lambda  _{B/Q}\rvert=1 $, we have 
\begin{equation}\label{e:TG1}
\lvert  \langle \Phi _{L^{s}} f_s  , g_s  \rangle \rvert 
\lesssim 2 ^{- \varepsilon  s}  \langle  \Phi_{H ^s} f'_s , g_s \rangle.  
\end{equation}
Above we have gained a geometric decay in $ s$. 

\smallskip

On the right of \eqref{e:TG1}, we have an operator acting on Hilbert space valued functions.  
Noting that $ \lVert f '_{s}\rVert _{\ell ^2 (\mathcal R_s)} =  \lVert f _{s}\rVert _{\ell ^2 (\mathcal R_s)} $ pointwise, 
we are free to replace $ f'_s$ in \eqref{e:TG1} by simply $ f_s$, as defined in \eqref{e:fs}.  
The remaining estimate to prove is that there is a choice of $ 1 < r < 2$, and sparse operator $ \Lambda _{r,r}$ so that 
\begin{equation}\label{e:TG2}
\lvert   \langle  \Phi _{H ^{s}} f_s , g_s \rangle\rvert  \lesssim 2 ^{ \frac \varepsilon 4 s} 
 \Lambda _{r,r} (f,g). 
\end{equation}
Note in particular that we will allow small geometric growth in this estimate, which will be absorbed into the geometric decay in \eqref{e:TG1}.

\smallskip 

\textbf{Step 3.}
The principal step is the application of sparse bound in Theorem~\ref{t:sparse}. 
 From the definitions in \eqref{e:Hj} and \eqref{e:TG}, we have 
\begin{equation*}
H^s (\beta )= \sum_{j\geq s/ \varepsilon } H _{j} (\beta )= \sum_{j \geq s/ \varepsilon } \int e (-\beta t ^{3}) \psi_j (t)
\; dt 
\end{equation*}
By choice of $ \psi $ in \eqref{e:psi}, it follows that the integrand on the right equals $  e (-\beta t ^{3}) \frac {dt}t$ for $ t > 2 ^{ s/ \varepsilon +1}$.  And, in particular, 
\begin{equation*}
H^s (\beta ) = \tfrac 13 \sum_{j \geq s/ \varepsilon } \int e (-\beta s) \frac {\psi_j (s ^{1/3})} {s ^{2/3}} \; ds 
\end{equation*}
But $ \psi  $ is odd,  hence so is $ \frac {\psi_j (s ^{1/3})} {s ^{2/3}}$. 
It follows that $ \check H ^{s}$ is a Calder\'on-Zygmund kernel, that is, it meets the conditions in \eqref{e:CZK}. 
Thus, the operator we are considering is convolution with respect to $ \check H ^{s}$, namely $ \Phi _{H ^{s}} = T _{\check H ^{s}}$. 

Therefore, from Theorem~\ref{t:sparse}, we have  the following inequality for the expression in \eqref{e:TG}: 
\begin{equation}\label{e:TG<}
\lvert   \langle T_{\check H ^{s}}  f_{s}, g_s  \rangle \rvert 
 \lesssim 
\Lambda _{1,1} (  f_s,  g_s).  
\end{equation}
There is one additional fact: All the intervals used in the definition of the sparse form in \eqref{e:TG<} above 
have length at least $ 2 ^{3(s/ \varepsilon-2) }$.  This is a simple consequence of $ \check H ^{s} (x) \mathbf 1_{ [-2 ^{3(s/ \varepsilon-2) }, 2 ^{3(s/ \varepsilon-2) }]} \equiv 0$. 

\smallskip 

\textbf{Step 4.}
We should emphasize that \eqref{e:TG<} has a small abuse of notation: The sparse form is computed on the vector-valued functions $ f_s$ and $ g_s$. That is the implied averages have to be made relative to the $ \ell ^2 (\mathcal R _s)$-norm.  
The last step is to remove the norm.  Namely, we show that there is a choice of $ 1< r < 2$, and sparse form $ \Lambda _{r,r}$ so that 
\begin{equation}\label{e:LL}
\Lambda _{1,1} (   f_s , g_s) \lesssim 2 ^{ \frac \varepsilon 4 s} \Lambda _{r,r} (f,g).  
\end{equation}
Combining this estimate with \eqref{e:TG<}, proves \eqref{e:TG2}, completing the proof.  

\smallskip 

The proof of \eqref{e:LL} is reasonably routine. It will be crucial that we have the estimate $ ^{\sharp} \mathcal R _{s} \lesssim 2 ^{2s}$.  Let $ \mathcal S$ be the sparse collection of intervals associated with the sparse form $ \Lambda _{1,1} (f_s,g_s)$. As noted, we are free to assume that for all $ S\in \mathcal S$, we have $ \lvert  S\rvert \geq 10 ^{s/4 \varepsilon } $.  Recall the the definition of $ f_s$ in \eqref{e:fs}. Write $ f _{s} = f _{s} ^{S,0} + f _{s} ^{S,1}$, where 
\begin{equation*}
f _{s} ^{S,0}    :=      \{ \chi _{s-1} \ast (\textup{Mod} _{-B/Q} (f \mathbf 1_{2S}))  :   B/Q\in \mathcal R_s\}.  
\end{equation*}
Above, we have localized the support of $ f$ to the interval $ 2S$. 
The same decomposition is used on the function $ g$ and $ g_s$. By subadditivity, we have 
\begin{align}\label{e:L0}
\Lambda _{1,1} (   f_s ,  g_s) 
&\leq 
\Lambda _{1,1} (   f_s ^{S,0} ,  g_s ^{S,0})  
\\ \label{e:L1}
& \qquad + \Lambda _{1,1} (   f_s ^{S,1} ,  g_s ^{S,0})  + \Lambda _{1,1} (   f_s ^{S,0} ,  g_s ^{S,1})  
\\ \label{e:L2}
& \qquad + \Lambda _{1,1} (   f_s ^{S,1} ,  g_s ^{S,1}).   
\end{align}

The crux of the matter is this estimate: For each interval $ S\in \mathcal S$, we have 
\begin{equation}\label{e:crux}
\langle  f _{s} ^{S,0} \rangle _{S} \lesssim 2 ^{s \frac {2-r} {r} } \langle f \rangle _{2S,r}, \qquad 1< r < 2. 
\end{equation}
And, the fraction $ \frac {2-r} {r} $ in the exponent can be made arbitrarily small, by taking $ 0< 2-r $ very small.  
Indeed, using the disjointness of the intervals in \eqref{e:disjoint}, and Plancherel, we have 
\begin{equation} \label{e:S0}
\langle  f _{s} ^{S,0} \rangle _{S,2} \lesssim   \langle f \rangle _{2S,2}. 
\end{equation}
Second, it is trivial that 
\begin{equation*}
\langle  \chi _{s-1} \ast (\textup{Mod} _{-B/Q} f \mathbf 1_{2S})  \rangle_S \lesssim \langle f \rangle _{2S}
\end{equation*}
and by simply summing over the bounded number of choices of  $ B/Q\in \mathcal R_s$, we have 
\begin{equation}  \label{e:S1}
\langle  f _{s} ^{S,0} \rangle _{S} \lesssim   2 ^{2s} \langle f \rangle _{2S}. 
\end{equation}
Interpolating between this and \eqref{e:S0} proves \eqref{e:crux}.  With that inequality in hand, we have, for 
$ 0< 2-r$ sufficiently small, 
\begin{align*}
\sum_{S\in \mathcal S} \lvert  S\rvert \langle  f _{s} ^{S,0} \rangle _{S} \langle  g_{s} ^{S,0} \rangle _{S}
& \lesssim 2 ^{ s \frac \varepsilon 4 }
\sum_{S\in \mathcal S} \lvert  S\rvert \langle  f \rangle _{2S,r} \langle  g \rangle _{2S,r}
\end{align*}
If the family $ \mathcal S$ is $ \frac 12 $-sparse, then the family $ \{2S  :  S\in \mathcal S\}$ is $ \frac 14 $-sparse, so we have our desired bound for the term on the right in \eqref{e:L0}.

\medskip 

There are three more terms, in \eqref{e:L1} and \eqref{e:L2}, which  are all much smaller. 
Recall the notation $  \{f\}$ of \eqref{e:new}. 
Since $ \chi $, as chosen in \eqref{e:Ljs}, is smooth, and the length of $ S \in \mathcal S$ is much larger than $ 10 ^{s}$, we have 
\begin{equation*}
\langle  \chi _{s-1} \ast (\textup{Mod} _{-B/Q} f \mathbf 1_{\mathbb R \setminus 2S})  \rangle_S \lesssim 
2 ^{-100s} \{  f \} _{S}, \qquad B/Q\in \mathcal R_s 
\end{equation*}
Summing this estimate over all $ 2 ^{2s}$ choices $ B/Q\in \mathcal R_s$, we see that each of the three terms in \eqref{e:L1} and \eqref{e:L2} are at most 
\begin{equation*}
2 ^{-s}\sum_{S\in \mathcal S} \lvert  S\rvert  \{  f \} _{S} \{  g \} _{S}.  
\end{equation*}
It remains to bound this last bilinear form, which is the task taken up in Lemma~\ref{l:new}. This completes the argument for \eqref{e:LL}.

\bibliographystyle{amsplain}	
\begin{bibdiv}
\begin{biblist}

\bib{2016arXiv160506401B}{article}{
    author={Benea, Cristina},
   author={Bernicot, Fr\'ed\'eric},
   author={Luque, Teresa},
   title={Sparse bilinear forms for Bochner Riesz multipliers and
   applications},
   journal={Trans. London Math. Soc.},
   volume={4},
   date={2017},
   number={1},
   pages={110--128},
   issn={2052-4986},
   review={\MR{3653057}},
   doi={10.1112/tlm3.12005},
}

\bib{MR3531367}{article}{
      author={Bernicot, Fr{\'e}d{\'e}ric},
      author={Frey, Dorothee},
      author={Petermichl, Stefanie},
       title={Sharp weighted norm estimates beyond {C}alder\'on-{Z}ygmund
  theory},
        date={2016},
        ISSN={2157-5045},
     journal={Anal. PDE},
      volume={9},
      number={5},
       pages={1079\ndash 1113},
  url={http://dx.doi.org.prx.library.gatech.edu/10.2140/apde.2016.9.1079},
      review={\MR{3531367}},
}

\bib{MR937581}{article}{
      author={Bourgain, J.},
       title={On the maximal ergodic theorem for certain subsets of the
  integers},
        date={1988},
        ISSN={0021-2172},
     journal={Israel J. Math.},
      volume={61},
      number={1},
       pages={39\ndash 72},
         url={http://dx.doi.org.prx.library.gatech.edu/10.1007/BF02776301},
      review={\MR{937581 (89f:28037a)}},
}

\bib{MR937582}{article}{
      author={Bourgain, J.},
       title={On the pointwise ergodic theorem on {$L\sp p$} for arithmetic
  sets},
        date={1988},
        ISSN={0021-2172},
     journal={Israel J. Math.},
      volume={61},
      number={1},
       pages={73\ndash 84},
         url={http://dx.doi.org.prx.library.gatech.edu/10.1007/BF02776302},
      review={\MR{937582 (89f:28037b)}},
}

\bib{MR1019960}{article}{
      author={Bourgain, Jean},
       title={Pointwise ergodic theorems for arithmetic sets},
        date={1989},
        ISSN={0073-8301},
     journal={Inst. Hautes \'Etudes Sci. Publ. Math.},
      number={69},
       pages={5\ndash 45},
         url={http://www.numdam.org/item?id=PMIHES_1989__69__5_0},
        note={With an appendix by the author, Harry Furstenberg, Yitzhak
  Katznelson and Donald S. Ornstein},
      review={\MR{1019960 (90k:28030)}},
}

\bib{MR3521084}{article}{
      author={Conde-Alonso, Jos{\'e}~M.},
      author={Rey, Guillermo},
       title={A pointwise estimate for positive dyadic shifts and some
  applications},
        date={2016},
        ISSN={0025-5831},
     journal={Math. Ann.},
      volume={365},
      number={3-4},
       pages={1111\ndash 1135},
  url={http://dx.doi.org.prx.library.gatech.edu/10.1007/s00208-015-1320-y},
      review={\MR{3521084}},
}

\bib{2016arXiv160305317C}{article}{
      author={{Culiuc}, A.},
      author={{Di Plinio}, F.},
      author={{Ou}, Y.},
       title={{Domination of multilinear singular integrals by positive sparse
  forms}},
        date={2016-03},
     journal={ArXiv e-prints},
      eprint={1603.05317},
}

\bib{Hua}{book}{
      author={Hua, Loo~Keng},
       title={Introduction to number theory},
   publisher={Springer-Verlag, Berlin-New York},
        date={1982},
        ISBN={3-540-10818-1},
        note={Translated from the Chinese by Peter Shiu},
      review={\MR{665428 (83f:10001)}},
}

\bib{2015arXiv151005789H}{article}{
      author={{Hyt{\"o}nen}, T.~P.},
      author={{Roncal}, L.},
      author={{Tapiola}, O.},
       title={{Quantitative weighted estimates for rough homogeneous singular
  integrals}},
     JOURNAL = {Israel J. Math.},
  FJOURNAL = {Israel Journal of Mathematics},
    VOLUME = {218},
      YEAR = {2017},
    NUMBER = {1},
     PAGES = {133--164},
      ISSN = {0021-2172},
   MRCLASS = {42B25 (42B30)},
  MRNUMBER = {3625128},
MRREVIEWER = {Michael T. Lacey},
       DOI = {10.1007/s11856-017-1462-6},
       URL = {https://doi-org.prx.library.gatech.edu/10.1007/s11856-017-1462-6},
}

\bib{MR3065022}{article}{
      author={Hyt{\"o}nen, Tuomas~P.},
      author={Lacey, Michael~T.},
      author={P{\'e}rez, Carlos},
       title={Sharp weighted bounds for the {$q$}-variation of singular
  integrals},
        date={2013},
        ISSN={0024-6093},
     journal={Bull. Lond. Math. Soc.},
      volume={45},
      number={3},
       pages={529\ndash 540},
         url={http://dx.doi.org/10.1112/blms/bds114},
      review={\MR{3065022}},
}

\bib{ISMW}{article}{
      author={Ionescu, Alexandru~D.},
      author={Stein, Elias~M.},
      author={Magyar, Akos},
      author={Wainger, Stephen},
       title={Discrete {R}adon transforms and applications to ergodic theory},
        date={2007},
        ISSN={0001-5962},
     journal={Acta Math.},
      volume={198},
      number={2},
       pages={231\ndash 298},
  url={http://dx.doi.org.prx.library.gatech.edu/10.1007/s11511-007-0016-x},
      review={\MR{2318564 (2008i:43007)}},
}

\bib{IW}{article}{
      author={Ionescu, Alexandru~D.},
      author={Wainger, Stephen},
       title={{$L\sp p$} boundedness of discrete singular {R}adon transforms},
        date={2006},
        ISSN={0894-0347},
     journal={J. Amer. Math. Soc.},
      volume={19},
      number={2},
       pages={357\ndash 383 (electronic)},
  url={http://dx.doi.org.prx.library.gatech.edu/10.1090/S0894-0347-05-00508-4},
      review={\MR{2188130 (2006g:42019)}},
}

\bib{Iwaniec_Kowalski}{book}{
      author={Iwaniec, Henryk},
      author={Kowalski, Emmanuel},
       title={Analytic number theory},
      series={American Mathematical Society Colloquium Publications},
   publisher={American Mathematical Society, Providence, RI},
        date={2004},
      volume={53},
        ISBN={0-8218-3633-1},
      review={\MR{2061214 (2005h:11005)}},
}

\bib{2016arXiv161103808K}{article}{
      author={{Karagulyan}, Grigor},
       title={{An abstract theory of singular operators}},
        date={2016-11},
     journal={ArXiv e-prints},
      eprint={1611.03808},
}

\bib{2015arXiv151206918K}{article}{
      author={{Krause}, B.},
      author={{Lacey}, M.},
       title={{A Discrete Quadratic Carleson Theorem on $ \ell ^2 $ with a
  Restricted Supremum}},
     journal={Int. Math. Res. Not. IMRN},
   date={2017},
   number={10},
   pages={3180--3208},
   issn={1073-7928},
   review={\MR{3658135}},
   doi={10.1093/imrn/rnw116},
}

\bib{2016arXiv160908701K}{article}{
      author={{Krause}, Ben},
      author={Lacey, Michael~T.},
       title={{Sparse Bounds for Random Discrete Carleson Theorems}},
        date={2016-09},
     journal={ArXiv e-prints},
      eprint={1609.08701},
}

\bib{2016arXiv161001531L}{article}{
      author={{Lacey}, M.~T.},
      author={{Mena}, D.},
       title={{The Sparse T1 Theorem}},
     journal={Houston J. Math.},
   volume={43},
   date={2017},
   number={1},
   pages={111--127},
   issn={0362-1588},
   review={\MR{3647935}},
}

\bib{2015arXiv150105818L}{article}{
      author={{Lacey}, Michael~T.},
       title={{An elementary proof of the $A\_2$ Bound}},
     journal={Israel J. Math.},
   volume={217},
   date={2017},
   number={1},
   pages={181--195},
   issn={0021-2172},
   review={\MR{3625108}},
   doi={10.1007/s11856-017-1442-x},
}

\bib{2016arXiv160906364L}{article}{
      author={Lacey, Michael~T.},
      author={Spencer, Scott},
       title={{Sparse Bounds for Oscillatory and Random Singular Integrals}},
    journal={New York J. Math.},
   volume={23},
   date={2017},
   pages={119--131},
   issn={1076-9803},
   review={\MR{3611077}},
}

\bib{MR3085756}{article}{
      author={Lerner, Andrei~K.},
       title={A simple proof of the {$A_2$} conjecture},
        date={2013},
        ISSN={1073-7928},
     journal={Int. Math. Res. Not. IMRN},
      number={14},
       pages={3159\ndash 3170},
      review={\MR{3085756}},
}

\bib{2015arXiv151207524M}{article}{
      author={{Mirek}, M.},
       title={{Square function estimates for discrete Radon transforms}},
    journal={Anal. PDE},
   volume={11},
   date={2018},
   number={3},
   pages={583--608},
   issn={2157-5045},
   review={\MR{3738256}},
   doi={10.2140/apde.2018.11.583},
}

\bib{2015arXiv151207518M}{article}{
      author={{Mirek}, M.},
      author={{Stein}, E.~M.},
      author={{Trojan}, B.},
       title={{L\^{}p(Z\^{}d)-estimates for discrete operators of Radon type:
  Maximal functions and vector-valued estimates}},
        date={2015-12},
     journal={ArXiv e-prints},
      eprint={1512.07518},
}

\bib{2015arXiv151207523M}{article}{
  author={Mirek, Mariusz},
   author={Stein, Elias M.},
   author={Trojan, Bartosz},
   title={$\ell^p(\Bbb Z^d) $-estimates for discrete operators of Radon
   type: variational estimates},
   journal={Invent. Math.},
   volume={209},
   date={2017},
   number={3},
   pages={665--748},
   issn={0020-9910},
   review={\MR{3681393}},
   doi={10.1007/s00222-017-0718-4},
}

\bib{MR2661174}{article}{
      author={Pierce, Lillian~B.},
       title={A note on twisted discrete singular {R}adon transforms},
        date={2010},
        ISSN={1073-2780},
     journal={Math. Res. Lett.},
      volume={17},
      number={4},
       pages={701\ndash 720},
  url={http://dx.doi.org.prx.library.gatech.edu/10.4310/MRL.2010.v17.n4.a10},
      review={\MR{2661174 (2011m:42019)}},
}

\bib{MR1056560}{article}{
      author={Stein, Elias~M.},
      author={Wainger, Stephen},
       title={Discrete analogues of singular {R}adon transforms},
        date={1990},
        ISSN={0273-0979},
     journal={Bull. Amer. Math. Soc. (N.S.)},
      volume={23},
      number={2},
       pages={537\ndash 544},
  url={http://dx.doi.org.prx.library.gatech.edu/10.1090/S0273-0979-1990-15973-7},
      review={\MR{1056560 (92e:42010)}},
}

\end{biblist}
\end{bibdiv}

 \end{document}